\documentclass[letterpaper, 11pt,  reqno]{amsart}

\usepackage{amsmath,amssymb,amscd,amsthm,amsxtra, esint}

\usepackage[implicit=true]{hyperref}

\headheight=8pt
\topmargin=0pt
\textheight=624pt
\textwidth=432pt
\oddsidemargin=18pt
\evensidemargin=18pt

\allowdisplaybreaks[2]

\sloppy

\hfuzz  = 0.5cm 


\usepackage{color}
\definecolor{gr}{rgb}   {0.,   0.69,   0.23 }
\definecolor{bl}{rgb}   {0.,   0.5,   1. }
\definecolor{mg}{rgb}   {0.85,  0.,    0.85}
\definecolor{yl}{rgb}   {0.8,  0.7,   0.}
\definecolor{or}{rgb}  {0.7,0.2,0.2}

\setlength{\pdfpagewidth}{8.50in}
\setlength{\pdfpageheight}{11.00in}

\newtheorem{theorem}{Theorem} [section]

\newtheorem{lemma}[theorem]{Lemma}
\newtheorem{proposition}[theorem]{Proposition}
\newtheorem{remark}[theorem]{Remark}


\newcommand{\I}{\hspace{0.5mm}\text{I}\hspace{0mm}}
\newcommand{\II}{\text{I \hspace{-2.8mm} I} }
\newcommand{\III}{\text{I \hspace{-2.9mm} I \hspace{-2.9mm} I}}

\newcommand{\IV}{\text{I \hspace{-2.9mm} V}}

\newcommand{\noi}{\noindent}
\newcommand{\Z}{\mathbb{Z}}
\newcommand{\R}{\mathbb{R}}
\newcommand{\C}{\mathbb{C}}
\newcommand{\T}{\mathbb{T}}

\let\Re=\undefined\DeclareMathOperator*{\Re}{Re}
\let\Im=\undefined\DeclareMathOperator*{\Im}{Im}

\newcommand{\RR}{\mathcal{R}}

\newcommand{\F}{\mathcal{F}}

\newcommand{\Nf}{\mathfrak{N}}
\newcommand{\Rf}{\mathfrak{R}}

\newcommand{\al}{\alpha}
\newcommand{\be}{\beta}

\newcommand{\Dl}{\Delta}
\newcommand{\eps}{\varepsilon}

\newcommand{\G}{\Gamma}
\newcommand{\ld}{\lambda}
\newcommand{\Ld}{\Lambda}
\newcommand{\s}{\sigma}

\newcommand{\ft}{\widehat}

\newcommand{\wt}{\widetilde}
\newcommand{\cj}{\overline}
\newcommand{\dx}{\partial_x}
\newcommand{\dt}{\partial_t}

\newcommand{\ta}{\theta}

\renewcommand{\l}{\ell}
\renewcommand{\o}{\omega}
\renewcommand{\O}{\Omega}

\newcommand{\les}{\lesssim}
\newcommand{\ges}{\gtrsim}

\newcommand{\jb}[1]
{\langle #1 \rangle}

\newcommand{\NN}{\mathcal{N}}

\newcommand{\GG}{\mathcal{G}}

\newcommand{\J}{\mathcal{J}}

\newtheorem*{ackno}{Acknowledgements}

\newcommand{\ub}{{\bf u}}
\newcommand{\uu}{\textsf{u}}
\newcommand{\w}{{\bf w}}

\newcommand{\Nb}{{\bf N}}
\newcommand{\Ns}{\textsf{N}}

\numberwithin{equation}{section}
\numberwithin{theorem}{section}

\usepackage[english, french]{babel}

\begin{document}
\baselineskip = 14pt

\selectlanguage{english}

\title[Quasi-invariant Gaussian measures for the cubic  third order NLS]
{Quasi-invariant Gaussian measures 
for the cubic 
nonlinear Schr\"odinger equation
with third order dispersion}

\author[T.~Oh, Y.~Tsutsumi,  and  N.~Tzvetkov]
{Tadahiro Oh, Yoshio Tsutsumi, and Nikolay Tzvetkov}

\address{
Tadahiro Oh, School of Mathematics\\
The University of Edinburgh\\
and The Maxwell Institute for the Mathematical Sciences\\
James Clerk Maxwell Building\\
The King's Buildings\\
Peter Guthrie Tait Road\\
Edinburgh\\ 
EH9 3FD\\
 United Kingdom}

\email{hiro.oh@ed.ac.uk}

\address{Yoshio Tsutsumi\\
Department of Mathematics\\ Kyoto University\\ Kyoto 606-8502\\ Japan}
\email{tsutsumi@math.kyoto-u.ac.jp}

\address{
Nikolay Tzvetkov\\
Universit\'e de Cergy-Pontoise\\
 2, av.~Adolphe Chauvin\\
  95302 Cergy-Pontoise Cedex \\
  France}

\email{nikolay.tzvetkov@u-cergy.fr}

\subjclass[2010]{35Q55}

\keywords{third order nonlinear Schr\"odinger equation;  
Gaussian measure; quasi-invariance; non-resonance}

\maketitle

\vspace{-8mm}

\begin{abstract}
In this paper, we consider the  cubic  nonlinear Schr\"odinger equation 
 with third order dispersion on the circle.
In the non-resonant case, 
we prove that the mean-zero Gaussian measures on Sobolev spaces 
$H^s(\T)$, $s > \frac 34$, 
are quasi-invariant under the flow.
In establishing the result, we apply gauge transformations to remove the resonant part of the dynamics
and use invariance of the Gaussian measures under these gauge transformations.

\end{abstract}

\begin{otherlanguage}{french}
\begin{abstract}


%

Dans cet article, nous consid\'erons l'\'equation   de Schr\"odinger non lin\'eaire  cubique avec dispersion d'ordre trois sur le cercle.
Dans le cas non r\'esonant, nous prouvons que les mesures gaussiennes de moyenne nulle sur les espaces de Sobolev
$ H ^ s (\T) $, $ s> \frac 34 $, sont quasi-invariantes par le flot.
En \'etablissant le r\'esultat, nous appliquons des transformations de gauge pour \'eliminer la partie r\'esonante de la dynamique
et nous utilisons l'invariance des mesures gaussiennes par rapport \`a ces transformations de gauge.

\end{abstract}
\end{otherlanguage}

\section{Introduction}
\label{SEC:intro}

\subsection{Cubic nonlinear Schr\"odinger equation with third order dispersion}

The main goal of this work is to extend the result of our previous paper \cite{OTz1}
to the more involved case of lower order dispersion.
Namely, we consider 
the following  
cubic  nonlinear Schr\"odinger equation with third order dispersion  (3NLS) on $\T$:
\begin{align}
\begin{cases}
i \dt u - i \dx^3 u - \be \dx^2 u  =  |u|^{2}u \\
u|_{t = 0} = u_0, 
\end{cases}
\quad  (x, t) \in \T\times \R, 
\label{3NLS1}
\end{align}

%
%

\noi
where  $u$ is a complex-valued function 
on $\T\times \R$ with $\T = \R/(2\pi \Z)$ and $\be \in \R$.
The equation \eqref{3NLS1} 
appears as a mathematical model for nonlinear pulse propagation phenomena in various fields of physics, 
in particular,  in nonlinear optics \cite{HK, Ag}.
The equation \eqref{3NLS1} without the third order dispersion
corresponds to the standard cubic nonlinear Schr\"odinger equation (NLS)
and it has been studied extensively from both theoretical and applied points of view.
In recent years, 
there has been an increasing interest in 
the cubic 3NLS \eqref{3NLS1} with the third order dispersion
in 
nonlinear optics \cite{Oi, LMKEHT, MS}.

While the equation \eqref{3NLS1}
conserves  the following Hamiltonian:
\begin{align*}
H(u) = - \frac 12 \Im \int_\T \dx^2 u \cj {\dx u}\, dx + \frac{\be}{2} \int_\T |\dx u |^2 dx - \frac{1}{4}\int_\T |u|^4 dx, 
\end{align*}

\noi
the leading order term is sign-indefinite 
and hence it does not play an important role in the well-posedness theory of \eqref{3NLS1}.
On the other hand, 
the conservation of the mass $M(u)$ defined by 
\begin{align*}
 M(u) = \int_\T|u|^2 dx, 
 \end{align*}

\noi
combined with local well-posedness in $L^2(\T)$, yields
the following global well-posedness  of~\eqref{3NLS1} in $L^2(\T)$.

\begin{proposition}\label{PROP:GWP}

The cubic  3NLS \eqref{3NLS1} is globally well-posed
in $H^s(\T)$
for $s \geq 0$.

\end{proposition}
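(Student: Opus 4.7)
The plan is to reduce to local well-posedness in $L^2(\T)$ via Bourgain's Fourier restriction norm method and then globalize using conservation of the mass $M(u) = \|u\|_{L^2}^2$. I work with the $X^{s,b}$-spaces adapted to the linear propagator $S(t) = e^{-t\dx^3 - it\be\dx^2}$, i.e.
\begin{equation*}
\|u\|_{X^{s,b}}^2 = \sum_{n \in \Z}\int_\R \jb{n}^{2s}\jb{\tau - n^3 + \be n^2}^{2b}|\widetilde{u}(\tau, n)|^2\, d\tau,
\end{equation*}
and consider the Duhamel formulation
\begin{equation*}
u(t) = \eta(t) S(t)u_0 - i\eta(t)\int_0^t S(t-t')\bigl(|u|^2u\bigr)(t')\, dt',
\end{equation*}
with $\eta$ a smooth cutoff. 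Standard linear $X^{s,b}$ estimates reduce matters to a trilinear bound in $X^{0,b}$.

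The key estimate to prove is the trilinear estimate
\begin{equation*}
\bigl\| u_1\cj{u_2}u_3\bigr\|_{X^{0,-b'}} \lesssim \prod_{j=1}^3 \|u_j\|_{X^{0,b}}
\end{equation*}
for some $b > \tfrac12 > b'$ with $b+b'<1$. The natural route is to establish a periodic $L^4$ Strichartz inequality of the form $\|u\|_{L^4_{t,x}(\R\times\T)} \lesssim \|u\|_{X^{0,b}}$ for suitable $b < 1/2$, via a Weyl-sum/counting argument exploiting the cubic phase $\phi(n) = n^3 - \be n^2$; one then closes the trilinear bound by Cauchy--Schwarz and Hölder, with the dispersive gain controlled by the resonance function
\begin{equation*}
\phi(n_1) - \phi(n_2) + \phi(n_3) - \phi(n_1 - n_2 + n_3) = -(n_1 - n_2)(n_3 - n_2)\bigl[3(n_1 + n_3) - 2\be\bigr].
\end{equation*}
The resonant frequencies $n_1 = n_2$ or $n_3 = n_2$ (and, for rational $\be$, the set $n_1 + n_3 = 2\be/3$) can be handled separately, their contribution being bounded by $\|u\|_{L^2}^2 \cdot \|u\|_{X^{0,b}}$ thanks to the conservation of mass; I expect the extra quadratic term from the third-order dispersion to make the non-resonant part at least as well-behaved as in Bourgain's argument for cubic NLS on $\T$. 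This is the main technical obstacle, but it is essentially a known computation; no auxiliary gauge transformation is needed at this stage since the goal is merely well-posedness, not quasi-invariance.

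With the trilinear estimate in hand, the Duhamel map is a contraction on a small ball of $X^{0,b}_T$ for $T = T(\|u_0\|_{L^2})>0$, which yields local well-posedness in $L^2(\T)$. Standard persistence of regularity in the same fixed-point scheme extends the argument to $H^s(\T)$ for any $s \ge 0$. Finally, since smooth solutions satisfy $\tfrac{d}{dt} M(u) = 0$ by direct integration by parts of \eqref{3NLS1}, an approximation argument propagates the mass conservation to $L^2$-solutions, and the uniform-in-time bound on $\|u(t)\|_{L^2}$ allows the local theory to be iterated on successive intervals of length $T(\|u_0\|_{L^2})$, giving global well-posedness in $H^s(\T)$ for all $s \ge 0$.
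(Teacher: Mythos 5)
Your proposal is correct and follows essentially the same route as the paper, which simply invokes local well-posedness in $L^2(\T)$ via the Fourier restriction norm method with the periodic $L^4$ Strichartz estimate (citing Miyaji--Tsutsumi \cite{MT1}) and globalizes by conservation of the mass $M(u)$. The details you sketch (the $X^{s,b}$ trilinear estimate driven by the resonance factorization, persistence of regularity, and iteration on time intervals of length depending only on $\|u_0\|_{L^2}$) are exactly the content of the cited reference.
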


The proof of local well-posedness
in $L^2(\T)$ follows from the Fourier restriction norm method
with the periodic Strichartz estimate.
See \cite{MT1}.
We point out that 
Proposition \ref{PROP:GWP} is sharp
since 
 \eqref{3NLS1} is ill-posed below $L^2(\T)$
 in the sense of non-existence of solutions~\cite{GO, MT2}.

In studying 3NLS \eqref{3NLS1} with the cubic nonlinearity, 
the following phase function $\phi(\bar n)$ plays an important role:
\begin{align}
\phi(\bar n)  & = \phi(n, n_1, n_2, n_3)
: = (n^3 -\be n^2) - (n_1^3 -\be n_1^2) 
+ (n_2^3 -\be n_2^2) - (n_3^3 -\be n_3^2) \notag\\
&= 3 (n - n_1) (n- n_3) \big(n_1 + n_3 - \tfrac23\be\big),
\label{phi1}
\end{align}

\noi
where the last equality holds under $n = n_1 - n_2 + n_3$.
Note that when $\frac{2\be}{3} \notin \Z$, the last factor never vanishes.
On the other hand, when $\frac{2\be}{3} \in \Z$, 
 the last factor is identically 0 for $n_3 = -n_1 + \frac{2\be}{3}$, $n_1 \in \Z$.
We  refer to the first case ($\frac{2\be}{3} \notin \Z$)
and the second case ($\frac{2\be}{3} \in \Z$)
as the non-resonant case and
the resonant case, respectively.
In the following, we focus on the non-resonant case.

\subsection{Transport property of the Gaussian measures on periodic functions}

Given $ s> \frac{1}{2}$, let $\mu_s$ be the  mean-zero Gaussian measure on $L^2(\T)$
with the covariance operator $2(\text{Id} - \Dl)^{-s}$, formally written as\footnote{Given a function $f$ on $\T$, 
we use both $\ft f_n$ and $\ft f(n)$ to denote the Fourier coefficient of $f$
at frequency~$n$.}
\begin{align}
 d \mu_s 
  = Z_s^{-1} e^{-\frac 12 \| u\|_{H^s}^2} du 
  =  \prod_{n \in \Z} Z_{s, n}^{-1}e^{-\frac 12 \jb{n}^{2s} |\ft u_n|^2}   d\ft u_n .
\label{gauss0}
 \end{align}

\noi
More concretely, 
we can define  $\mu_s$  as the induced probability measure
under the map\footnote{In the following, we drop the harmless factor of $2\pi$
when it does not play any important role.}
\begin{align}
 \o \in \O \mapsto u^\o(x) = u(x; \o) = \sum_{n \in \Z} \frac{g_n(\o)}{\jb{n}^s}e^{inx}, 
\label{gauss1}
 \end{align}

\noi
where  $\jb{\,\cdot\,} = (1+|\cdot|^2)^\frac{1}{2}$
and 
$\{ g_n \}_{n \in \Z}$ is a sequence of independent standard complex-valued 
Gaussian random variables (i.e.~$\text{Var}(g_n) = 2$) on a probability space
$(\O, \F, P)$.
It is easy to see that 
 $u^\o$ in \eqref{gauss1} lies  in $H^{\s}(\T)\setminus H^{s-\frac 12}(\T)$ for $\s < s -\frac 12$, almost surely. 
 Namely, 
 $\mu_s$ is a Gaussian probability measure on $H^{\s}(\T)$,
 $\s < s -\frac 12$.
  Moreover, for the same range of $\s$,  
 the triplet $(H^s, H^\s, \mu_s)$ forms an abstract Wiener space.  See  \cite{GROSS, Kuo2}.

Our main goal is to study the transport property
of the Gaussian measures $\mu_s$ on Sobolev spaces
under the dynamics of \eqref{3NLS1}.
We first recall the following definition of quasi-invariant measures.
Given a measure space $(X, \mu)$, 
we say that $\mu$ is {\it quasi-invariant} under a transformation $T:X \to X$
if 
the transported measure $T_*\mu = \mu\circ T^{-1}$
and $\mu$
are equivalent, i.e.~mutually absolutely continuous with respect to each other.

We now state our main result.

\begin{theorem}\label{THM:quasi}
Let $\frac 23 \beta \notin  \Z$.   Then, for $s > \frac 34$, the Gaussian measure $\mu_s$ is quasi-invariant under the flow of 
the  cubic 3NLS
 \eqref{3NLS1}. 
\end{theorem}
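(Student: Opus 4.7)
The plan is to follow the general strategy developed in \cite{OTz1}, suitably adapted to the cubic 3NLS setting, where the phase function \eqref{phi1} is cubic (rather than quartic) in the frequencies, forcing a higher regularity threshold. The proof proceeds in four steps.

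\smallskip
\noindent\textbf{Step 1 (Gauge reduction).} On the Fourier side, $|u|^2 u$ is a trilinear sum over $n = n_1 - n_2 + n_3$. The contributions with $\phi(\bar n) = 0$ come from (i) $n_1 = n$ or $n_3 = n$ (which produce $M(u) u$ and $|u|^2 u$-type terms whose prefactors depend only on $M(u)$, itself conserved) and (ii) $n_1 + n_3 = \frac 23 \beta$, which is eliminated by the non-resonance hypothesis $\frac 23 \beta \notin \Z$. Hence, a phase gauge of the form $u \mapsto e^{i \alpha(M(u)) t} u$ (and, if convenient, a spatial translation) removes all truly resonant contributions while preserving $\mu_s$: the phase gauge because, conditionally on $(|g_n|)_{n \in \Z}$, the arguments of $g_n$ are i.i.d.\ uniform on $[0, 2\pi)$; the translation because $\mu_s$ is translation-invariant. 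On the gauged equation, $|\phi(\bar n)| \ges |n - n_1|\,|n - n_3|$ on the support of the remaining interactions.

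\smallskip
\noindent\textbf{Step 2 (Modified energy).} Introduce the Galerkin truncation $u_N = P_N u$ (projection onto $|n| \leq N$) of the gauged equation. Compute $\frac{d}{dt}\|u_N\|_{H^s}^2$ and perform a normal form reduction by dividing the resulting oscillatory quartic expression by $i\phi(\bar n)$ and integrating by parts in time. This produces a quartic boundary correction $R_N(u_N)$ and a sextic remainder $Q_N(u_N)$. Define
\begin{equation*}
E_s(u_N) := \|u_N\|_{H^s}^2 + R_N(u_N).
\end{equation*}
The target estimate is
\begin{equation*}
\Big|\tfrac{d}{dt} E_s(u_N)\Big| \les 1 + \|u_N\|_{H^s}^{\kappa},
\end{equation*}
uniformly in $N$, for some $\kappa = \kappa(s)$, valid for $s > \tfrac 34$.

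\smallskip
\noindent\textbf{Step 3 (Cameron-Martin / Ramer computation).} The truncated flow $\Phi_N(t)$ is a Hamiltonian ODE on the finite-dimensional phase space $P_N L^2(\T)$ and hence preserves Lebesgue measure there, while also preserving $M(u_N)$. Combining Liouville's theorem, preservation of $M$, and the energy estimate of Step 2, one shows that the weighted measures with density $\propto e^{-\frac 12 E_s(u_N)}$ are invariant up to controlled corrections under $\Phi_N(t)$. This yields $\Phi_N(t)_* \mu_s \ll \mu_s$ with $L^p(d\mu_s)$-bounds on $\tfrac{d(\Phi_N(t)_* \mu_s)}{d\mu_s}$ that are uniform in $N$ and locally uniform in $t$, for every $p < \infty$.

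\smallskip
\noindent\textbf{Step 4 (Limit).} Global well-posedness (Proposition \ref{PROP:GWP}) and standard approximation arguments give $\Phi_N(t) u \to \Phi(t) u$ in $H^\sigma(\T)$, $\sigma < s - \frac 12$, $\mu_s$-almost surely. The uniform Radon-Nikodym bounds pass to the limit, giving $\Phi(t)_* \mu_s \ll \mu_s$ for the gauged flow (the reverse absolute continuity follows from time-reversibility). Undoing the gauge preserves $\mu_s$ and yields the conclusion for the original 3NLS flow.

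\smallskip
The main obstacle is the energy estimate in Step 2: after the normal form reduction one is left with multilinear expressions weighted by $|\phi(\bar n)|^{-1}$, providing only two derivatives of smoothing (as opposed to three in the fourth-order case of \cite{OTz1}). Balancing these two derivatives against three $H^s$-inputs and one $H^s$-output, while carefully treating the near-resonant regimes where one of $|n-n_1|, |n-n_3|$ is small but $|n_1 + n_3 - \tfrac 23 \beta| \ges 1$, is exactly what forces $s > \tfrac 34$. The quintic remainder $Q_N$, which arises when $\dt$ falls on the nonlinear part of $\dt u_N$ in the normal form, requires an additional Sobolev-type estimate at the same regularity and is controlled by the same counting of derivatives.
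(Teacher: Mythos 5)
There is a genuine gap, and it sits exactly where the paper's main new idea lives. In Step 1 you claim that the zero set of $\phi(\bar n)$ on $n=n_1-n_2+n_3$ consists of terms ``whose prefactors depend only on $M(u)$'' plus the hyperplane $n_1+n_3=\frac23\be$. That is not correct: by inclusion--exclusion the contribution from $n_1=n$ or $n_3=n$ is $2\big(\fint_\T|u|^2\big)\ft u_n-|\ft u_n|^2\ft u_n$, and only the first piece is a function of the conserved mass. The diagonal term $|\ft u_n|^2\ft u_n$ survives every gauge of the form $u\mapsto e^{i\al(M(u))t}u$, and $|\ft u_n(t)|^2$ is \emph{not} conserved by the nonlinear flow (cf.~\eqref{gauge5a}). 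So after your Step 1 you are left with the equation \eqref{3NLS3}, whose resonant part $\RR_0(v)$ gains only $2\s$ derivatives; the paper shows this caps the straightforward Ramer argument at $s>1$, and asserts that the energy method applied to \eqref{3NLS3} fails for \emph{every} $s$ because the third-order dispersion is too weak to absorb this term. Removing the diagonal resonance is precisely what requires the second, coefficient-wise gauge $\J_t[f]=\sum_n e^{-it|\ft f_n|^2}\ft f_ne^{inx}$ of \eqref{gauge4}, together with the (nontrivial) fact that $\mu_s$ is invariant under $\J_t$ (Lemma \ref{LEM:gauss}). Your outline never produces this step, so the claimed lower bound $|\phi(\bar n)|\ges|n-n_1|\,|n-n_3|$ ``on the support of the remaining interactions'' does not hold on the interactions you actually retain.

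Two further points. First, your target energy estimate $|\frac{d}{dt}E_s(u_N)|\les 1+\|u_N\|_{H^s}^{\kk}$ is in the wrong norm: $\|u\|_{H^s}=\infty$ $\mu_s$-almost surely, so such a bound cannot be integrated against $\mu_s$ uniformly in $N$. The usable form is \eqref{P0}, with at most two factors in $H^{s-\frac12-\eps}$ and the rest in the conserved $L^2$ norm; it is exactly the $\l^\infty_n$ estimate \eqref{P1} needed there that produces the threshold $\s>\frac14$, i.e.\ $s>\frac34$. Second, even granting all this, your Steps 2--4 follow Method 2 (Galerkin truncation, Liouville, uniform Radon--Nikodym bounds), whereas the paper proves the theorem by Method 1: after the two gauges and the interaction representation it performs a normal form reduction on the equation \eqref{3NLS5} itself, proves $(1+\eps)$-smoothing of the Duhamel part (Lemmas \ref{LEM:Znonlin} and \ref{LEM:Xnonlin}), and invokes Ramer's criterion (Proposition \ref{PROP:RA0}). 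Remark \ref{REM:M2} indicates that a correctly executed energy-method variant on the doubly gauged equation could at best recover the same range $s>\frac34$, so your route is not hopeless in principle, but as written it is missing its essential ingredient and its key estimate is not in a form that the measure can see.
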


In probability theory, 
the transport property 
of Gaussian measures 
under linear and nonlinear transformations
has been studied extensively.
See, for example,  \cite{CM, Kuo, RA, Cru1, Cru2, Bog, AF}.
On the other hand,
in the field of Hamiltonian PDEs, 
Gaussian measures
naturally appear in the construction 
of invariant measures 
associated to conservation laws
such as Gibbs measures, 
  starting with the seminal work of Bourgain \cite{BO94, BO96}. 
See \cite{OTz1, BOP4} for the references therein.
In \cite{TzBBM}, the third author initiated the study of transport properties of Gaussian measures 
under the flow of a Hamiltonian PDE,
where two methods were presented
in establishing quasi-invariance of the Gaussian measures $\mu_s$
as stated in Theorem~\ref{THM:quasi}.
See also the subsequent work~\cite{OTz1, OTz3, OSTz} 
on the transport property of the Gaussian measures
under nonlinear Hamiltonian PDEs.

\smallskip

\noi
$\bullet$  {\bf Method 1:} 
The first method is to reduce an equation under consideration
so that one can apply a general 
criterion on quasi-invariance
of a Gaussian measure on an abstract Wiener space
under a nonlinear transformation
due to Ramer \cite{RA}.
Essentially speaking, 
this result  states that $\mu_s$
is quasi-invariant
if the nonlinear part is $(d+\eps)$-smoother than the linear part
for an evolution equation posed on $\T^d$.
Namely, the given nonlinear dynamics is basically a compact perturbation of the linear dynamics.

\smallskip

\noi
$\bullet$  {\bf Method 2:} 
This method was introduced in \cite{TzBBM} by the third author to go beyond Ramer's general argument
in studying concrete examples of evolution equations.
It is based on combining 
both PDE techniques 
and probabilistic techniques in an intricate manner.
In particular, 
the crucial step in this second method is to establish
an effective energy estimate (with smoothing)
for the (modified) $H^s$-functional.
See, for example,  Proposition 5.1 in~\cite{TzBBM} and  Proposition 6.1 in \cite{OTz1}.

\smallskip

We refer readers to \cite{OTz2}
for a brief introduction of the subject and an overview of these two methods.
We point out that, in applying either method, 
it is essential to exhibit nonlinear smoothing for given dynamics.
We also remark that 
the second method in general performs better than the first method.
See \cite{TzBBM, OTz1, OTz3}.
See also Remark \ref{REM:M2}.

In \cite{OTz1}, we studied the transport property of the Gaussian measure $\mu_s$
under 
the  following cubic fourth order  nonlinear Schr\"odinger equation (4NLS)  on $\T$:
\begin{align}
i \dt u -    \dx^4 u =  |u|^{2}u .
\label{4NLS}
\end{align}

\noi
Our main tool
to show nonlinear smoothing in this context was  normal form reductions
analogous to the approach employed in 
\cite{BIT, KO, GKO}.
In \cite{BIT}, Babin-Ilyin-Titi introduced
a normal form approach for constructing solutions to dispersive PDEs.
It turned out that this approach has various applications
such as establishing unconditional uniqueness~\cite{KO, GKO, CGKO}
and exhibiting nonlinear smoothing~\cite{ET}.
In applying  the first method,
we performed a normal form reduction 
to the (renormalized) equation
and
proved quasi-invariance of $\mu_s$ under \eqref{4NLS} for $s > 1$.
On the other hand, in applying the second method, 
we performed a normal form reduction 
to the equation satisfied by the (modified) $H^s$-energy functional
and proved quasi-invariance  for $s > \frac 34$
(which was later improved to the optimal range of regularity $s > \frac 12$
via an infinite iteration of normal form reductions
in \cite{OSTz}).

Now, let us turn our attention to  the cubic 3NLS \eqref{3NLS1}.
Let us first proceed as in~\cite{OTz1}
and transform the equation.
It is crucial that 
the Gaussian measure $\mu_s$ is quasi-invariant under 
the transformations we consider in the following.
(In fact, $\mu_s$ is invariant under these transformations.  See Lemma \ref{LEM:gauss}.)
Hence, it suffices to prove quasi-invariance of $\mu_s$ under the resulting dynamics.
Given $t \in \R$, we define a gauge transformation $\GG_t$ on $L^2(\T)$
by setting
\begin{align}
\GG_t [f ]: = e^{ 2 i t \fint |f|^2} f, 
\label{gauge1}
\end{align}

\noi
where $\fint_\T f(x) dx := \frac{1}{2\pi} \int_\T f(x)dx$.
Given a function $u \in C(\R; L^2(\T))$, 
we define $\GG$ by setting
\[\GG[u](t) : = \GG_t[u(t)].\]

\noi
Note that $\GG$ is invertible
and its inverse is given by $\GG^{-1}[u](t) = \GG_{-t}[u(t)]$.

Let  $u \in C(\R; L^2(\T))$ be a solution to \eqref{3NLS1}.
Define $\ub$ by 
\begin{align}
\ub (t) := \mathcal{G}[u](t)  = e^{ 2 i t \fint |u(t)|^2} u(t).
\label{gauge2}
\end{align}

\noi
Then, it follows from 
 the mass conservation 
that $\ub$ is a solution to the following renormalized 3NLS:
\begin{align}
i \dt \ub - i \dx^3 \ub - \be \dx^2 \ub   =  \bigg( | \ub |^{2}  - 2 \fint_\T |\ub |^2dx \bigg) \ub. 
\label{3NLS2}
\end{align}

\noi
Let $\Nb(\ub) = (|\ub|^{2} - 2 \fint_\T  |\ub|^2 dx\big)\ub $ be the renormalized nonlinearity in \eqref{3NLS2}.
Then, we have
\begin{align}
\Nb(\ub) 
& = \sum_{n\in \Z} e^{inx} \sum_{\substack{n = n_1 - n_2 + n_3\\n\ne n_1, n_3\\n_1 + n_3 \ne   \frac{2\be}{3}}}
\ft {\ub}_{n_1}\cj{\ft {\ub}_{n_2}}\ft {\ub}_{n_3}
 - \sum_{n\in \Z} e^{inx} |\ft {\ub}_n|^2\ft {\ub}_n
\notag\\
& \hphantom{X}
 + \sum_{n\in \Z} e^{inx}\sum_{\substack{n = n_1 - n_2 + n_3\\n\ne n_1, n_3\\n_1 + n_3 =    \frac{2\be}{3}}}
\ft {\ub}_{n_1}\cj{\ft {\ub}_{n_2}}\ft {\ub}_{n_3}
\notag\\
& =: \Nb_1 (\ub) + \Nb_2 (\ub) + \Nb_3 (\ub) .
\label{nonlin1}
\end{align}

\noi
In view of \eqref{phi1}, 
the first term corresponds to the non-resonant contribution, 
while the second and third terms correspond to the resonant contribution.
Moreover, under  the non-resonant assumption: $\frac{2\be}{3} \notin \Z$, 
we have $\Nb_3(\ub)\equiv 0$.
See Remark \ref{REM:res} for more on the renormalized equation \eqref{3NLS2}.

At this point, we can introduce  the interaction representation $v$ of $\ub$ as in \cite{OTz1} by 
\begin{align}
v(t) = S(-t)\ub(t), 
\label{gauge3}
\end{align}

\noi
where $S(t) = e^{t (\dx^3 - i \be \dx^2)}$
denotes the linear solution map for 3NLS \eqref{3NLS1}.
Under the non-resonant assumption ($\frac 23 \be \notin \Z$), this reduces 
\eqref{3NLS2} to the following equation for $\{\ft v_n\}_{n \in \Z}$:\footnote{The non-resonant part
$\NN_0(v)$ is non-autonomous.  For simplicity of notation, however, 
we drop the $t$-dependence.  A similar comment applies to multilinear terms
appearing in the following.} 
\begin{align}
\dt \ft v_n 
& = -i \sum_{\G(n)} e^{i t \phi(\bar n) } \ft  v_{n_1}\cj{\ft v_{n_2}}\ft v_{n_3}
+ i |\ft v_n|^2 \ft v_n \notag\\
& =: \ft{\NN_0(v)}(n) + \ft{\RR_0(v)}(n), 
\label{3NLS3}
\end{align}

\noi
where the phase function $\phi(\bar n)$ is as in \eqref{phi1} and the plane $\G(n)$ is given by
\begin{align}
\G(n) 
= \big\{(n_1, n_2, n_3) \in \Z^3:\, 
 n = n_1 - n_2 + n_3, \   n \ne  n_1, n_3, 
 \text{ and } n_1 + n_3 \ne   \tfrac{2\be}{3}\big\}.
\label{Gam1}
 \end{align}

\noi
In view of \eqref{phi1}, 
we refer to the first term $\NN_0(v)$ and the second term $\RR_0(v)$ 
on the right-hand side of \eqref{3NLS3}
as the non-resonant and resonant terms, respectively.
On the one hand we do not have any smoothing on $\RR_0(v)$ under a time integration.
On the other hand, 
  Lemma \ref{LEM:phase} below on the phase function $\phi(\bar n)$ shows that 
there is a smoothing on the non-resonant  term $\NN_0(v)$
under a time integration.
Hence by applying a normal form reduction as in ~\cite{OTz1}, 
we can  exhibit $(1+\eps)$-smoothing on the nonlinear part if $s > 1$.
See Lemma~\ref{LEM:Znonlin}.
Then, by invoking Ramer's result (Proposition~\ref{PROP:RA}), 
we conclude quasi-invariance of~$\mu_s$ 
under~\eqref{3NLS3} (and hence under~\eqref{3NLS1}; see Lemma \ref{LEM:gauss}) for $s > 1$.
We first point out that the regularity $s > 1$ is optimal with respect to this argument
(namely, applying Ramer's result in a straightforward manner)
due to the resonant part $\RR_0(v)$.
See  Remark~5.4 in~\cite{OTz1}.
Moreover, due to a weaker dispersion for 3NLS \eqref{3NLS1}
as compared to 4NLS \eqref{4NLS}, 
the second method applied to \eqref{3NLS3} based on an energy estimate
does not work
for any $s \in \R$.
Hence, a new idea is needed to go below $s = 1$.

To overcome this problem, we introduce 
another gauge transformation, which is the main new idea of this paper.
Given $t \in \R$, we define a  gauge transformation $\J_t$ on $L^2(\T)$
by setting
\begin{align}
 \J_t [f ]: = \sum_{n \in \Z} e^{  -i t |\ft f_n|^2 } \ft f_ne^{inx}.
\label{gauge4}
\end{align}

\noi
Define $\uu$ by 
\begin{align}
\uu(t) = \J_t[\ub(t)].
\label{gauge5}
\end{align}

\noi
First, by noting that $|\ft \uu_n(t)|^2 = |\ft \ub_n(t)|^2$,
it follows from \eqref{nonlin1} and \eqref{gauge5} that 
\begin{align} 
\dt \big(|\ft \uu_n |^2\big)
& =2 \Re (\dt \ft \ub_n \cj{\ft \ub_n})\notag\\
& = 
 2 \Im \bigg( \sum_{\G(n)} e^{i t \psi(\bar n)}
\ft {\uu}_{n_1}\cj{\ft {\uu}_{n_2}}\ft {\uu}_{n_3} \cj{\ft \uu_n}\bigg), 
\label{gauge5a}
\end{align}

\noi
where the (time-dependent) phase function $\psi(\bar n)$ is defined by 
\begin{align*}
\psi(\bar n)= \psi(n, n_1, n_2, n_3)(\uu) := - |\ft \uu_n|^2+  |\ft \uu_{n_1}|^2 - |\ft \uu_{n_2}|^2 + |\ft \uu_{n_3}|^2 .
\end{align*}

\noi
Then, 
we see that $\uu$ satisfies the following equation:
\begin{align}
i \dt \uu - i \dx^3 \uu - \be \dx^2 \uu   =  \Ns_1(\uu) + \Ns_2(\uu), 
\label{3NLS4}
\end{align}

\noi
where the nonlinearities $\Ns_1(\uu)$ and $\Ns_2(\uu)$ are given by 
\begin{align*}
\Ns_1(\uu) 
& = \sum_{n\in \Z} e^{inx} 
 \sum_{\G(n)} e^{i t \psi(\bar n)}
\ft {\uu}_{n_1}\cj{\ft {\uu}_{n_2}}\ft {\uu}_{n_3},\\
\Ns_2(\uu) 
& = 2t  \sum_{n\in \Z} e^{inx} \,  \ft \uu_n  \Im\bigg(
\sum_{\G(n)} e^{i t \psi(\bar n)}
\ft {\uu}_{n_1}\cj{\ft {\uu}_{n_2}}\ft {\uu}_{n_3}\cj{\ft \uu_n}\bigg).
\end{align*}

\noi
Finally, we consider the  interaction representation $w$ of $\uu$ given by 
\begin{align}
w(t) = S(-t)\uu(t). 
\label{gauge6}
\end{align}

\noi
Then, 
the equation 
\eqref{3NLS4} is reduced to the following equation for $\{\ft w_n\}_{n \in \Z}$: 
\begin{align}
\dt \ft w_n 
& = -i \sum_{\G(n)} e^{i t (\phi(\bar n) +\psi(\bar n)) } \ft  w_{n_1}\cj{\ft w_{n_2}}\ft w_{n_3}\notag\\
& \hphantom{Xl}
-2 i  t   \,  \ft w_n  \Im\bigg(
\sum_{\G(n)} e^{i t (\phi(\bar n) +\psi(\bar n)) }
\ft {w}_{n_1}\cj{\ft {w}_{n_2}}\ft {w}_{n_3}\cj{\ft w_n}\bigg)\notag\\
&  =: \ft{\NN_1(w)}(n)
 +  \ft{\NN_2(w)}(n),
\label{3NLS5}
\end{align}

\noi
where $\phi(\bar n)$ is  as in \eqref{phi1}
and  $\psi (\bar n)$ is now expressed in terms of $w$:
\begin{align}
\psi(\bar n)= \psi(n, n_1, n_2, n_3)(w) := - |\ft w_n|^2+  |\ft w_{n_1}|^2 - |\ft w_{n_2}|^2 + |\ft w_{n_3}|^2 .
\label{psi1}
\end{align}

\noi
By using the additional gauge transformation $\J_t$, 
 we removed the resonant part at the expense of introducing
the second term $\NN_2(w)$ in \eqref{3NLS5}.  
While this second term looks more complicated, 
it can be handled essentially in the same manner as  the non-resonant term $\NN_1(w)$
by noting that $\ft{\NN_2(w)}(n)$ is basically $\ft{\NN_1(w)}(n)$
with two extra (harmless) factors of $\ft w_n$. 
See Lemma \ref{LEM:Xnonlin}.
We also note that the phase function $\psi(\bar n)$  in \eqref{psi1} depends on 
the time variable $t$, 
which introduces extra terms 
 in  the normal form reduction step.
See~\eqref{Xnonlin1} and~\eqref{Ynonlin1} below.
The main point is, however, that there is no resonant contribution
in~\eqref{3NLS5}
and,  
as a result, we can show  $(1+\eps)$-smoothing on the nonlinear part 
 for $\frac 34 < s < 1$ (Lemma \ref{LEM:Xnonlin})
and  apply Ramer's result
to conclude quasi-invariance of the Gaussian measure~$\mu_s$.

Given $t, \tau \in \R$, let 
$\Phi(t):L^2\to L^2$ be the solution map for \eqref{3NLS1}
and 
$ \Psi_0(t, \tau)$ and $\Psi_1(t, \tau) :L^2\to L^2$ 
be the solution maps for \eqref{3NLS3} and \eqref{3NLS5}, respectively,
sending initial data at time $\tau$
to solutions at time $t$.\footnote{Note that \eqref{3NLS3}
and \eqref{3NLS5}
are non-autonomous.
As in \cite{OTz1},  this non-autonomy does not play an essential
role in the remaining part of the paper.
} 
When $\tau =0$, we  denote $\Psi_0(t, 0)$
and $\Psi_1(t, 0)$ by $\Psi_0(t)$ and $\Psi_1(t)$ for simplicity.
Then, 
it follows from \eqref{gauge2},    \eqref{gauge3},   \eqref{gauge5}, and   \eqref{gauge6} 
that 
\begin{align*}
\Phi(t) =  \GG_t^{-1} \circ S(t) \circ \Psi_0(t)
\qquad \text{and} \qquad 
\Phi(t) =  \GG_t^{-1} \circ \J_t^{-1}\circ S(t) \circ \Psi_1(t).
\end{align*}

\noi
As we pointed out above, 
the Gaussian measure $\mu_s$ is invariant under $S(t)$, $\GG_t$, and $\J_t$
(Lemma \ref{LEM:gauss})
and hence it suffices to prove quasi-invariance of $\mu_s$
under $\Psi_0(t)$ or $\Psi_1(t)$.
In applying Ramer's result, we view 
$\Psi_0(t)$ and $\Psi_1(t)$ as the identity plus a perturbation.
By writing   
\begin{align}
\Psi_0(t) = \text{Id} + K_0(t)
\qquad\text{and}\qquad
\Psi_1(t) = \text{Id} + K_1(t), 
\label{nonlin1c}
\end{align}

\noi
we show that $K_0(t)(u_0)$ and $K_1(t)(u_0)$ are $(1+\eps)$-smoother
than the random initial data $u_0$ distributed according to $\mu_s$
in appropriate ranges of regularities
(Lemmas \ref{LEM:Znonlin} and \ref{LEM:Xnonlin}).

We conclude this introduction with several remarks.
%
%
%
%
%
%
%
%
%
%

\begin{remark}\rm 
Dispersion is essential in establishing quasi-invariance of $\mu_s$ in Theorem \ref{THM:quasi}.
In \cite{OSTz}, the first and third authors with Sosoe
studied the transport property of $\mu_s$
under the following dispersionless model on $\T$:
\begin{align}
i \dt u   =  |u|^{2}u 
\label{NLSX}
\end{align}

\noi
In particular, they showed that $\mu_s$ is not quasi-invariant under 
the dynamics of \eqref{NLSX}.

\end{remark}

\begin{remark}\label{REM:M2} \rm 

(i) 
We point out that  the regularity restriction $s > \frac 34$
for the cubic 4NLS~\eqref{4NLS}
in~\cite{OTz1}
was optimal in a straightforward application of the second method
based on an energy estimate of the form:
\begin{align}
\frac{d}{dt} E(u) 
\leq C(\| u  \|_{L^2})
\| u \|_{H^{s - \frac 12 - \eps}}^{2-\theta}
\label{P0}
\end{align}

\noi
for some $\theta > 0$
and for any solution $u$ to \eqref{4NLS}.\footnote{We  point out that one can 
also close an argument by establishing an energy estimate \eqref{P0} with  $\theta = 0$.
See~\cite{OTz3}.}
Here,     $E(u) = \|u\|_{H^s}^2 + R(u)$ denotes a modified $H^s$-energy 
 with a suitable correction term $R(u)$ obtained via a normal form reduction
 applied to (the evolution equation satisfied by) $\|u\|_{H^s}^2$.
Note that, in \eqref{P0},  we are allowed to place  only (at most) two factors of $u$
in the $H^\s$-norm  with $\s = s - \frac 12 - \eps $
and need to place other factors in the conserved  (weaker)  $L^2$-norm.
In particular, the regularity restriction $s > \frac 34$ (i.e.~$\s > \frac 14$)
comes from the following estimate \cite[(6.14)]{OTz1}:
\begin{align}
\bigg\| \sum_{(m_1, m_2, m_3) \in \G(n_1)}
 \ft u_{m_1}\cj{\ft u_{m_2}}\ft u_{m_3}
\bigg\|_{\l^\infty_{n_1}} \les
 \| u \|_{H^\frac{1}{6}}^3
  \les \|u\|_{L^2}^{1+\theta}
 \| u \|_{H^\s}^{2-\theta}, 
\label{P1}
\end{align}

\noi
which holds for $\s > \frac 14$.
We point out that, 
in applying the first method based on Ramer's result, 
we can place all the factors in the $H^\s$-norm.
See Lemmas \ref{LEM:Znonlin}
and \ref{LEM:Xnonlin}

%
%

We stress that this regularity restriction $s > \frac 34$ can not be removed unless one applies an infinite iteration of normal
form reductions as in \cite{OSTz}, since, if we stop applying normal form reductions
within a finite number of steps, then we would need to apply  \eqref{P1}
to estimate the contribution from the trilinear terms added at the very last step.
The same restriction applies to the cubic 3NLS \eqref{3NLS1}.
Namely,  even if we apply the second method based on an energy estimate
to  the transformed equation \eqref{3NLS5},
we can expect, at best, the same regularity range $s > \frac 34$
as in Theorem \ref{THM:quasi},
not yielding any improvement over our proof of Theorem \ref{THM:quasi} based on the first method.

\smallskip

\noi
(ii)  If we apply the second gauge transformation \eqref{gauge5}
to the cubic 4NLS \eqref{4NLS} and apply the first method based on Ramer's argument, 
we can prove quasi-invariance of $\mu_s$ for $s >  \frac 23$.
While this is better than the regularity restriction $s > \frac 34$ in  \cite{OTz1},
this approach does not seem to yield an optimal result ($s > \frac 12$) as in \cite{OSTz}
in view of \eqref{XX1} below.
One way in this direction would be to apply an infinite iteration of normal form reductions
at the level of the equation as in \cite{GKO}.

\end{remark}

\begin{remark}\rm
After the completion of this paper, 
the second method based on the energy estimate
has been further developed in \cite{PTV, GOTW}.
In a recent preprint \cite{FT}, Forlano-Trenberth applied the  approaches developed in \cite{PTV, GOTW}
to study the cubic fractional NLS:
\begin{align}
i \dt u + ( -  \dx^2)^\al u  =  |u|^{2}u
\label{fNLS}
\end{align}

\noi
and showed that the Gaussian measure $\mu_s$ in \eqref{gauss0}
is quasi-invariant under the flow of \eqref{fNLS}
for 
\[s
> \begin{cases}
\max( \frac 23, \frac{11}{6} - \al), & \text{if }\al \geq 1
\rule[-3mm]{0pt}{0pt}
\\
\frac{10\al + 7}{12}, & \text{if }\frac 12 < \al <  1.
\end{cases}
\]

\noi
In particular, this shows quasi-invariance of $\mu_s$ under the standard NLS with the second order dispersion
for $s > \frac 56$.

\end{remark}

\begin{remark}\rm

In \cite{NTT}, the second author with Nakanishi and Takaoka
studied the low regularity well-posedness of the modified KdV equation on $\T$.
In particular, the following ``gauge'' transformation was used in \cite[Theorem 1.3]{NTT}
under an extra regularity assumption
$|n|^\frac{1}{2}\ft u_n (0)\in \l^\infty_n$:
\begin{align*}
 \wt \J_{\text{mKdV}} [u ](t): = \sum_{n \in \Z} e^{  -i n \int_0^t |\ft u_n(t')|^2 dt'} \ft u_n(t)e^{inx}
\end{align*}

\noi
to completely remove the resonant part of the dynamics.
Note that this extra regularity assumption was needed 
 to guarantee  the boundedness of the  transformation $\wt \J_{\text{mKdV}}$.
Instead of $\J_t$ in \eqref{gauge4}, 
one may be tempted to use an analogous ``gauge'' transformation $\wt \J$ defined by 
\begin{align}
 \wt \J [\ub ](t): = \sum_{n \in \Z} e^{  -i  \int_0^t |\ft \ub_n(t')|^2 dt'} \ft \ub_n(t)e^{inx}
\label{gauge7}
\end{align}

\noi
(for solutions $\ub$ to \eqref{3NLS2})
since it would produce a simpler equation  than \eqref{3NLS4} and \eqref{3NLS5}.
Note that, in our smooth setting, 
we do not need  an extra regularity assumption thanks to the global well-posedness in $L^2(\T)$
stated in Proposition \ref{PROP:GWP}.

We point out that one crucial ingredient in the proof of Theorem~\ref{THM:quasi}
is the invariance\footnote{In view of Lemma~\ref{LEM:gauss}~(iii), 
quasi-invariance would suffice.} of the Gaussian measure $\mu_s$ under the gauge transformation $\J_t$
defined in \eqref{gauge4}.
Note that the  transformation $\wt \J$ in \eqref{gauge7} depends the evolution on $[0, t]$, 
namely, it is not a well-defined gauge transformation on the phase space $L^2(\T)$.
Hence, studying the transport property 
of $\mu_s$  under $\wt \J$ (such as quasi-invariance) would already require 
the understanding of the transport property of $\mu_s$
under \eqref{3NLS2}
(in particular, in a time average manner which is highly non-trivial).\footnote{Even 
in a situation where we have an invariant measure, it is not at all trivial to know 
how random solutions at different times
are correlated.  In fact, it is an important open question to study
the space-time correlation of a random solution distributed by an invariant (Gibbs) measure
for a dispersive PDE.}
Therefore, while the transformation $\wt \J$
may be of use for the low regularity well-posedness theory, 
  it is not suitable for our analysis.

\end{remark}

\begin{remark}\label{REM:res}
\rm

In \cite{MT2},  the second author with Miyaji
 considered the Cauchy problem for 
 the renormalized 3NLS   \eqref{3NLS2}  in the non-resonant case: $\frac{2\be}{3} \notin \Z$.
By adapting the argument in \cite{TT},  
they proved local well-posedness
of  \eqref{3NLS2} in $H^s(\T)$, $ s > -\frac 16$.
	
It is of interest to study the transport property of the Gaussian measure
$\mu_s$ in the resonant case: $\frac{2\be}{3} \in \Z$.
In this case, we can write $\Nb_3(\ub)$ in \eqref{nonlin1} as 
\begin{align*}
\Nb_3(\ub) 
=  \sum_{n\in \Z} e^{inx}
\cj{\ft \ub\big(- n+\tfrac{2\be}{3}\big)}
\bigg\{ \sum_{\substack{n_1 \in \Z\\ n\ne n_1,   -n_1 + \frac{2\be}{3}}}
\ft \ub(n_1)\ft \ub\big(- n_1+\tfrac{2\be}{3}\big)\bigg\}.
\end{align*}

\noi
Since there is no dispersion to exploit on this term, 
we do not have a result analogous to Theorem \ref{THM:quasi}
in the resonant case.
We also point out that the well-posedness of the renormalized 3NLS \eqref{3NLS2}
in negative Sobolev spaces
is open in the resonant case.

\end{remark}

In the following, 
various constants depend on the parameter $\be \notin \frac{3}{2}\Z$ but we suppress its dependence
since $\beta$ is fixed.
In view of the time reversibility of the equation,
we only consider positive times.

\section{Proof of Theorem \ref{THM:quasi}}
\label{SEC:Pf}

In this section, we present the proof of Theorem \ref{THM:quasi}
under the non-resonant assumption $\frac 23 \be \notin \Z$.
Our basic approach is to apply Ramer's result
after exhibiting sufficient smoothing
on the nonlinear part.
As mentioned in Section \ref{SEC:intro}, 
we first transform the original equation~\eqref{3NLS1}
to~\eqref{3NLS3} or~\eqref{3NLS5}.
We then perform 
a normal form reduction
and establish  nonlinear smoothing
by exploiting the dispersion of the equation.

We first recall the precise statement 
of the main result in  \cite{RA}
for readers' convenience.
In the following, we use   $HS(H)$ to denote
the space of Hilbert-Schmidt operators on $H$
and $GL (H)$ to denote the space of invertible linear operators on $H$
with a bounded inverse.

\begin{proposition}[Ramer
\cite{RA}]\label{PROP:RA0}
Let $(H, E, \mu)$ be an abstract Wiener space,
where  
$\mu$ is the standard Gaussian measure on $E$.
Suppose that $T = \textup{Id} + K: U \to E$
be a continuous (nonlinear) transformation
from some open subset   $U\subset E$ into $E$
such that
\begin{itemize}
\item[\textup{(i)}] $T$ is a homeomorphism of $U$ onto an open subset of $E$.
\item[\textup{(ii)}]
We have $K(U) \subset H$
and $K:U \to H$ is continuous.

\item[\textup{(iii)}]
For each $x \in U$, 
the map $DK(x)$ is a Hilbert-Schmidt operator on $H$.
Moreover, $DK: x \in U \to DK(x) \in  HS(H)$ is continuous.

\item[\textup{(iv)}]
$\textup{Id}_H + DK(x) \in GL(H)$ for each $x \in U$.

\end{itemize}

\noi

\noi
Then, $\mu$ and $\mu\circ T $
are mutually absolutely continuous measures on $U$.
\end{proposition}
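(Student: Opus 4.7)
The plan is to reduce the infinite-dimensional change-of-variables problem to the Cameron--Martin formula for pure $H$-translations via a finite-dimensional approximation, using the Carleman--Fredholm regularized determinant $\det_2$ to handle the Jacobian in infinite dimensions. Fix an orthonormal basis $\{e_j\}_{j \in \N}$ of $H$, let $H_n = \textup{span}(e_1, \ldots, e_n)$, let $P_n$ be orthogonal projection onto $H_n$, and set $K_n := P_n \circ K$ and $T_n := \textup{Id} + K_n$. Because $\mu$ decomposes as a product $\mu_n \otimes \mu_n^\perp$ along $E = H_n \oplus H_n^\perp$ and $K_n$ takes values in $H_n$, the map $T_n$ perturbs only the finite-dimensional fiber, where $\mu_n$ has an ordinary Gaussian density and the classical Jacobian formula applies fiberwise.

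Writing the finite-dimensional determinant as $\det(I + A) = \det_2(I + A)\exp(\textup{tr}\, A)$ for $A = P_n DK(x) P_n$ and absorbing the trace into the linear part of the Gaussian exponent, the fiberwise change-of-variables computation yields
\begin{equation*}
\frac{d(\mu \circ T_n^{-1})}{d\mu}(y) = |\det_2(\textup{Id}_H + DK_n(x_n))|^{-1} \exp\bigl( - \delta_n K_n(x_n) - \tfrac{1}{2}\|K_n(x_n)\|_H^2\bigr),
\end{equation*}
where $x_n = T_n^{-1}(y)$ and $\delta_n$ is the divergence (Skorohod) operator on $H_n$. Passing to the limit then rests on three inputs: by hypothesis (iii) and continuity of $\det_2 : HS(H) \to \R$ one obtains $\det_2(\textup{Id}_H + DK_n(x)) \to \det_2(\textup{Id}_H + DK(x))$, and this limit is nonzero by hypothesis (iv); by hypothesis (ii) and closability of $\delta$ on $L^2(\mu;H)$, the divergences $\delta_n K_n$ converge in $L^2(\mu)$ to $\delta K$; and by the homeomorphism hypothesis (i), one has $T_n^{-1}(y) \to T^{-1}(y)$ on the open set $T(U)$. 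Combining these produces an explicit density for $\mu \circ T^{-1}$ with respect to $\mu$ on $T(U)$, proving $\mu \circ T^{-1} \ll \mu$. The reverse absolute continuity on $U$ follows by applying the same machinery to $T^{-1} = \textup{Id} - K \circ T^{-1}$, whose differential $\textup{Id}_H - (\textup{Id}_H + DK)^{-1}DK$ is again a Hilbert--Schmidt perturbation of the identity that is boundedly invertible by (iii)--(iv).

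The main obstacle is the second input above: the divergence term $\delta_n K_n(x) = \sum_{j=1}^n \langle K(x), e_j\rangle_H e_j^\ast(x) - \textup{tr}(P_n DK(x) P_n)$ involves the Gaussian random variables $e_j^\ast(x)$, which do not sum pointwise; convergence must be proved via orthogonality of the Wiener chaos, with the Hilbert--Schmidt bound from (iii) providing precisely the cancellation between the formal stochastic integral and the trace needed to make the square function summable. A secondary technical point is arranging the finite-rank approximants $T_n$ to be eventually injective on relatively compact subsets of $U$; here one combines (iv) with a quantitative open mapping argument derived from the continuity of $DK : U \to HS(H)$ in (iii), which ensures a uniform lower bound on $\|(\textup{Id}_H + DK(x))^{-1}\|$ on compacta and hence local invertibility of $T_n$ for $n$ large.
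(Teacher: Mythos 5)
The paper does not actually prove this proposition: it is quoted verbatim from Ramer's article \cite{RA} and used as a black box, so there is no internal proof to compare against. Your sketch is therefore an attempt at Ramer's theorem itself, and it follows the standard architecture of the original argument (finite-dimensional projections, fiberwise Jacobian via the Cameron--Martin formula, the Carleman--Fredholm determinant $\det\nolimits_2$, passage to the limit), so the overall route is the right one.

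There is, however, a genuine gap at the step you yourself single out as the main input: the $L^2(\mu)$ convergence $\delta_n K_n \to \delta K$. Closability of the divergence gives this only if $K \in L^2(\mu; H)$ and $DK \in L^2(\mu; HS(H))$, and nothing in hypotheses (i)--(iv) provides such integrability: (ii) and (iii) are purely pointwise continuity statements on the open set $U$, and since $E$ is infinite-dimensional (hence not locally compact), continuity on $U$ does not yield boundedness of $\|K(x)\|_H$ or $\|DK(x)\|_{HS(H)}$ off a set of small $\mu$-measure. The same issue affects the final passage to the limit: without uniform integrability (or a Scheff\'e-type argument identifying the total mass of the candidate limiting density), pointwise convergence of the $\det\nolimits_2$ and divergence factors does not give convergence of the measures $\mu \circ T_n^{-1}$. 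Ramer's proof repairs exactly this by a localization that your sketch omits: one exhausts $U$ by measurable sets $U_m$ on which $\|K\|_H$ and $\|DK\|_{HS(H)}$ are uniformly bounded and on which $\textup{Id}_H + DK(x)$ has a uniformly bounded inverse (using (iv) together with the continuity in (iii)), proves mutual absolute continuity of $\mu$ and $\mu\circ T$ restricted to each $U_m$ --- where the $L^2$ bounds you invoke are then available by construction --- and assembles the global statement from the fact that absolute continuity is a countably local property. With that localization inserted, and with your correct observation that $T^{-1} = \textup{Id} - K\circ T^{-1}$ has $H$-derivative $\textup{Id}_H - (\textup{Id}_H + DK)^{-1}DK$, again a Hilbert--Schmidt perturbation of the identity, the sketch becomes a faithful outline of Ramer's argument.
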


\subsection{Basic reduction}

We decompose the solution map $\Phi(t)$ to \eqref{3NLS1}
as  
\begin{align*}
\Phi(t) =  \GG_t^{-1} \circ S(t) \circ \Psi_0(t)
\end{align*}

\noi
for $ s> 1$
and 
\begin{align*}
\Phi(t) =  \GG_t^{-1} \circ \J_t^{-1}\circ S(t) \circ \Psi_1(t)
\end{align*}

\noi
for $ \frac 34 < s \leq 1$.
The following proposition shows that, 
in order to prove quasi-invariance of the Gaussian measure $\mu_s$ under $\Phi(t)$,
it suffices to establish its quasi-invariance 
under $\Psi_0(t)$ or $\Psi_1(t)$.

\begin{lemma}\label{LEM:gauss}
\textup{(i)}
Given  a  complex-valued mean-zero  Gaussian random variable $g$ with variance $\s$, i.e.~$g \in \NN_\C(0, \s )$, 
let $  Tg = e^{- i t |g|^2} g$ for some $t \in \R$.
Then, 	$Tg  \in \NN_\C(0, \s)$. 

\smallskip

\noi
\textup{(ii)}
Let $t \in \R$.  Then, the Gaussian measure $\mu_s$ defined in \eqref{gauss0} is invariant under 
the linear map 
$S(t) = e^{t (\dx^3 - i \be \dx^2)}$, the map $\GG_t$ in \eqref{gauge1}, and the map $\J_t$
in \eqref{gauge4}.

\smallskip

\noi
\textup{(iii)}
Let $(X, \mu)$ be a measure space.
Suppose that $T_1$ and $T_2$
are maps on $X$ into itself
such that $\mu$ is quasi-invariant under $T_j$ for each $j = 1, 2$.
Then, $\mu$ is quasi-invariant under  $T = T_1 \circ T_2$.

\end{lemma}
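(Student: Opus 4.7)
My plan is to exploit the rotational invariance of the complex Gaussian density and the fact that all three maps in (ii) act as unit-modulus phase multiplications on the Fourier coefficients. For part (i), since $g \in \NN_\C(0, \sigma)$ has density proportional to $e^{-|z|^2/\sigma}$, which depends only on $|z|$, I would pass to polar coordinates $g = re^{i\theta}$ and observe that conditionally on $r = |g|$ (whose distribution is preserved by $T$ since $|Tg| = |g|$), the phase $\theta$ is uniform on $[0, 2\pi)$ and independent of $r$. The map $Tg = e^{-it|g|^2}g$ shifts $\theta$ by $-tr^2$, which is a function of $r$ alone, so the shifted phase is again uniform on $[0, 2\pi)$ and independent of $r$; hence $Tg$ and $g$ have the same law.

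For part (ii), my strategy is to identify, for each of the three maps, the unit-modulus phase it applies to the Fourier coefficients $\{\ft u_n\}_{n\in \Z}$, which under $\mu_s$ are independent complex Gaussians in $\NN_\C(0, 2\jb{n}^{-2s})$. For the linear propagator, a direct Fourier computation gives $\ft{S(t)u}(n) = e^{-it(n^3 - \be n^2)}\ft u_n$, a deterministic phase on each mode, and rotation invariance of each complex Gaussian yields invariance of the joint law. For $\GG_t$, all Fourier coefficients are multiplied by the common phase $e^{2it\fint |u|^2}$, which is a measurable function of the moduli $\{|\ft u_n|\}$ alone. Conditioning on these moduli (which $\GG_t$ itself preserves), the arguments $\{\arg \ft u_n\}$ are i.i.d.\ uniform on $[0, 2\pi)$, so rotating each by the same angle (determined entirely by the moduli) leaves the conditional joint distribution unchanged, and integrating out the moduli gives invariance. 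For $\J_t$, each coefficient is independently multiplied by its own phase $e^{-it|\ft u_n|^2}$, so the conclusion follows by applying part (i) independently on each Fourier mode together with Kolmogorov's theorem on product measures.

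For part (iii), the argument is purely measure-theoretic: writing $T^{-1}A = T_2^{-1}(T_1^{-1}A)$, if $\mu(A) = 0$ then $\mu(T_1^{-1}A) = 0$ by quasi-invariance under $T_1$, and then $\mu(T_2^{-1}(T_1^{-1}A)) = 0$ by quasi-invariance under $T_2$; the reverse implication is identical, so $\mu \circ T^{-1}$ and $\mu$ have the same null sets. The only subtle point in the argument is in part (ii) for the two nonlinear gauges, where I must justify that the random rotation angles are almost surely well-defined on the support of $\mu_s$: for $\GG_t$ this requires $\fint |u|^2 < \infty$ almost surely, which holds since $\mu_s$ is supported in $L^2(\T)$ for $s > \frac12$, and for $\J_t$ no such issue arises since $|\ft u_n|^2 < \infty$ for every $n$. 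I also need to verify that $\J_t$ (and $\GG_t$) maps each $H^\sigma$ to itself isometrically, which is immediate since they preserve every $|\ft u_n|$. Beyond these bookkeeping points, the entire proof reduces to the observation that $\mu_s$, viewed as a product measure on Fourier modes, is preserved by any coordinate-wise phase rotation whose angles are measurable functions of the moduli.
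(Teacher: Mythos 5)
Your proof is correct and follows essentially the same route as the paper: the paper's own argument imports the statements for $S(t)$, $\GG_t$, and parts (i) and (iii) from Lemmas 4.1, 4.2, 4.4, and 4.5 of \cite{OTz1}, and proves the only new claim (invariance under $\J_t$) exactly as you do, by writing $\mu_s = \bigotimes_n \rho_n$ as a product over Fourier modes and applying the single-mode statement (i) to each factor. Your polar-coordinate proof of (i), the conditioning-on-moduli argument for $\GG_t$, and the null-set argument for (iii) are all standard and supply precisely the details the paper delegates to the earlier reference.
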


\begin{proof}
In view of 
Lemmas 4.1, 4.2, 4.4, and 4.5 in \cite{OTz1}, 
it remains to prove invariance of $\mu_s$ under $\J_t$.
Note that $\mu_s$ can be written as an infinite product of Gaussian measures:
\begin{align*}
\mu_s = \bigotimes_{n \in \Z} \rho_n,
\end{align*}

\noi
where $\rho_n$ is the probability distribution for $\ft u_n = \frac{g_n}{\jb{n}^s}$ defined in \eqref{gauss1}.
In particular, $\rho_n$ is a mean-zero Gaussian probability measure on $\C$ with variance $2 \jb{n}^{-2s}$.
Note that the action of $\J_t$ on $\ft u_n$ is given by $T$ in  Part (i),
which leaves the Gaussian measure $\rho_n$ invariant. 
Hence,
we conclude that $\mu_s$ is invariant under~$\J_t$.
\end{proof}

Fix $s > \frac 34$ and $\s < s - \frac{1}{2}$ sufficiently close to $s - \frac 12$.
First, recall that $\mu_s$ is a probability measure on $H^{\s}(\T)$.
Given $R>0$, let $B_R$ be the open ball of radius $R$ centered at the origin in 
$H^{\s}(\T)$.
We also recall
from \eqref{nonlin1c},  \eqref{3NLS3}, and \eqref{3NLS5} that 
\begin{align*}
\Psi_j(t) = \text{Id} + K_j(t), \quad j = 0, 1, 
\end{align*}

\noi
where $K_j(t)$ is given by 
\begin{align*}
K_0(t)(u_0) 
& = \int_0^t  \NN_0(v)(t')  dt'
+ \int_0^t   \RR_0(v)(t') dt'
 =: \Nf_0(v)(t) + \Rf_0(v)(t), \\ 
K_1(t)(u_0) 
& = \int_0^t  \NN_1(w)(t')  dt' + \int_0^t  \NN_2(w)(t')  dt'
 =: \Nf_1(w)(t)
 +  \Nf_2(w)(t),
\end{align*}

\noi
where $v$ and $w$ are the solutions to \eqref{3NLS3} and \eqref{3NLS5}
with initial data $u_0$.
Then, the proof of Theorem \ref{THM:quasi} is reduced to proving the following proposition, 
guaranteeing
the hypotheses of Ramer's result (Proposition \ref{PROP:RA0}).
See the proof
of Theorem~1.2 for $s > 1$ in \cite[Subsection~5.2]{OTz1}.

\begin{proposition}\label{PROP:RA}
Given $s > \frac34$, let $j = 0$ if $s > 1$ and $j = 1$ if $\frac 34 < s \leq 1$.
  Given $R > 0$, 
there exists $\tau = \tau(R) > 0$ such that,
for each $t \in  (0, \tau(R)]$, the following statements hold:

\begin{itemize}
\item[(i)] 

$\Psi_j(t)$ is a homeomorphism of $B_R$
onto an open subset of $H^{\s}(\T)$.

\smallskip

\item[(ii)]  
We have $K_j(t) (B_R) \subset H^s(\T)$ and $K_j(t): B_R \to H^s(\T)$ is continuous.

\smallskip

\item[(iii)] 
For each $u_0 \in B_R$, 
the map $DK_j(t)|_{u_0}$ is a Hilbert-Schmidt operator on $H^s(\T)$.
Moreover, 
$DK_j(t): u_0 \in B_R \mapsto DK_j(t)|_{u_0} \in HS(H^s(\T))$
is continuous.

\smallskip

\item[(iv)] $\textup{Id}_{H^s} + DK_j(t)|_{u_0} \in GL (H^s(\T))$ 
for each $u_0 \in B_R$.

\end{itemize}

\end{proposition}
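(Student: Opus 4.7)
The strategy is to verify the four hypotheses of Ramer's theorem (Proposition \ref{PROP:RA0}) for the nonlinear part $K_j(t)$ of the flow, reducing all of them to a single multilinear smoothing estimate of the schematic form
\begin{align}
\|K_j(t)(u_0)\|_{H^s(\T)} \les C(R)\, t^{\theta}, \qquad u_0 \in B_R,
\label{plan1}
\end{align}
for some $\theta > 0$ and all $t \in (0, \tau(R)]$, together with its trilinear/pentalinear Lipschitz version. Once \eqref{plan1} and its linearization are established, (ii) is immediate; (i) follows by combining \eqref{plan1} with Proposition \ref{PROP:GWP} and Lemma \ref{LEM:gauss} to obtain continuity of both $\Psi_j(t)$ and its inverse on $B_R$; (iii) reduces to the boundedness of the linearization $DK_j(t)|_{u_0}\colon H^\s(\T) \to H^s(\T)$, since composition with the Hilbert--Schmidt embedding $H^s \embeds H^\s$ (valid because $s - \s > \tfrac12$) then yields the required Hilbert--Schmidt operator on $H^s$; and (iv) follows from a Neumann series together with the operator norm bound implicit in (iii), which is $O(t^\theta)$ for small $t$.

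\textbf{Proving the smoothing estimate.} In the regime $s > 1$ I would work directly with equation \eqref{3NLS3} and the decomposition $K_0 = \Nf_0 + \Rf_0$. The resonant contribution $\ft{\Rf_0(v)}(n) = i\int_0^t |\ft v_n|^2 \ft v_n\, dt'$ is estimated by brute force:
\begin{align*}
\|\Rf_0(v)(t)\|_{H^s}^2 \les t^2 \|v\|_{L^\infty_t H^\s}^6 \sum_{n \in \Z} \jb{n}^{2s - 6\s},
\end{align*}
which is finite precisely when $\s > \tfrac{2s+1}{6}$, and taking $\s$ arbitrarily close to $s - \tfrac12$ forces the threshold $s > 1$ --- matching the restriction flagged in the paper. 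For the non-resonant piece $\Nf_0$ I would perform a normal form reduction, integrating by parts in $t'$ against $\pa_{t'} e^{it'\phi(\bar n)} = i\phi(\bar n) e^{it'\phi(\bar n)}$ and exploiting the factorization $\phi(\bar n) = 3(n-n_1)(n-n_3)(n_1+n_3 - \tfrac{2\be}{3})$ from \eqref{phi1}. The non-resonance assumption $\tfrac{2\be}{3} \notin \Z$ guarantees a uniform positive lower bound on the third factor over $\G(n)$, so the boundary and quintic remainders produced by the integration by parts each carry a divisor $\phi(\bar n)^{-1}$; a Bourgain-type Cauchy--Schwarz argument over $(n_1, n_2, n_3)$ then delivers the $(1+\eps)$-smoothing in~\eqref{plan1}. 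In the regime $\tfrac34 < s \leq 1$ the resonant term $\Rf_0$ is no longer acceptable, so I would use the additional gauge $\J_t$ and work with \eqref{3NLS5}, where the phase factor has been replaced by $e^{it'(\phi(\bar n) + \psi(\bar n)(w))}$. Since $L^2$-conservation of $w$ yields $|\psi(\bar n)(w)| \les \|u_0\|_{L^2}^2 \les R^2$ while $|\phi(\bar n)|$ is bounded below on $\G(n)$ and unbounded off a fixed compact set of frequencies, one has $|\phi + \psi| \ges |\phi|$ outside a finite exceptional set, and the same integration-by-parts scheme applies to both $\Nf_1$ and the pentalinear term $\Nf_2$.

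\textbf{Main obstacle.} The most delicate point is the smoothing estimate for $K_1$ when $\tfrac34 < s \leq 1$. The phase $\psi(\bar n)(w)$ in \eqref{psi1} is genuinely \emph{time-dependent} (through $w(t')$), so the normal form reduction generates additional terms in which $\pa_{t'}$ hits $\psi(\bar n)$; substituting \eqref{3NLS5} converts these into hexa-linear and higher expressions that still need to exhibit $(1+\eps)$-smoothing. The only way to close the estimate is to place at most two factors in $H^\s$ and absorb the remaining factors into the conserved $L^2$-norm (compare the discussion around \eqref{P1}), and matching the threshold $s > \tfrac34$ tightly relies on the extra $\ft w_n$ factor attached to the external index $n$ in $\Nf_2$, together with the off-diagonal structure of $\G(n)$ from \eqref{Gam1}, both of which contribute the summability in $n$ required to recover \eqref{plan1} and thereby Proposition~\ref{PROP:RA}.
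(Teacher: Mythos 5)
Your overall architecture --- verifying Ramer's hypotheses by establishing nonlinear smoothing for $K_j(t)$ via a normal form reduction, with the extra gauge $\J_t$ removing the resonant term when $\frac 34 < s \leq 1$ --- is the same as the paper's, and your reduction of (i)--(iv) to the smoothing estimates (homeomorphism from global well-posedness and Lemma \ref{LEM:gauss}, the Hilbert--Schmidt property from boundedness $H^\s \to H^s$ composed with the Hilbert--Schmidt embedding $H^s \embeds H^\s$, invertibility for small $t$) matches what the paper imports from \cite{OTz1}. The genuine gap lies in how you propose to close the smoothing estimate in the range $\frac 34 < s \leq 1$, which is the heart of the matter. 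First, your claim that ``the only way to close the estimate is to place at most two factors in $H^\s$ and absorb the remaining factors into the conserved $L^2$-norm'' describes the \emph{second} method (the energy method), not the Ramer method used here: Remark \ref{REM:M2}\,(i) states explicitly that in the Ramer approach all factors may be placed in $H^\s$, and Lemmas \ref{LEM:Znonlin} and \ref{LEM:Xnonlin} are indeed stated with every factor in $H^\s$. Second, you misidentify the source of the threshold $s > \frac 34$: it does not come from the extra $\ft w_n$ factor in $\Nf_2$ (the paper shows those factors are harmless, being bounded in $\l^\infty_n$), but from the quintic term $\III_1$ generated by the normal form reduction of $\Nf_1$ in the near-diagonal regime $|n| \sim |n_1| \sim |n_2| \sim |n_3|$, where Lemma \ref{LEM:phase} yields only $|\phi(\bar n)| \ges n_{\max}\Ld$ (one power of $n_{\max}$, not two) and the estimate is closed by an $\l^q$ H\"older--Young interpolation requiring $\frac{1-\s+}{3} \leq \s$, i.e.\ $\s > \frac 14$. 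Relatedly, your phase analysis is too coarse: the uniform lower bound $|n_1 + n_3 - \frac 23 \be| \geq \mathrm{dist}(\frac 23\be, \Z) > 0$ only gives $|\phi(\bar n)| \ges |n-n_1|\,|n-n_3|$, which can be $O(1)$ while $n_{\max}$ is arbitrarily large, so a single Cauchy--Schwarz pass does not by itself deliver any smoothing; the dichotomy of Lemma \ref{LEM:phase} is indispensable, and the weaker alternative \eqref{phi4} is exactly where the work is.

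Two further points need repair. (a) The factor $t^{\theta}$ in your claimed bound $\|K_j(t)(u_0)\|_{H^s} \les C(R)\, t^{\theta}$ is not automatic: the boundary terms produced by the normal form reduction (e.g.\ $\I(n,t) - \I(n,0)$ in \eqref{Znonlin1}) carry no power of $t$, and their $H^{\s+1+\eps}$ norms are merely $O(1)$. Recovering a positive power of $t$ in $H^s$ requires interpolating this against the trivial $O(t)$ bound at lower regularity, and this interpolation is precisely where the full $(1+\eps)$-smoothing of Lemmas \ref{LEM:Znonlin} and \ref{LEM:Xnonlin} earns its keep; your computation for $\Rf_0$ targets only $H^s$ (a gain of $\frac 12 +$), which would leave no room for this step. (b) In verifying (iii) for $j=1$, the derivative $DK_1(t)|_{u_0}$ must also hit the exponentials $e^{it\psi(\bar n)}$, since $\psi(\bar n)$ depends on $w$ and hence on $u_0$; this produces additional terms, handled in Remark \ref{REM:Ramer} via the bound \eqref{Rem1}, that your linearization does not account for.
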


Furthermore, arguing as in the proof of Proposition 5.3 in \cite{OTz1}, 
we see that Proposition~\ref{PROP:RA} follows
once we prove  the following nonlinear estimates (Lemmas \ref{LEM:Znonlin} and \ref{LEM:Xnonlin}),
exhibiting $(1+\eps)$-smoothing.
See also Remark \ref{REM:Ramer}.
The first  lemma
shows  nonlinear smoothing for the $v$-equation \eqref{3NLS3}.
In particular, when $\s > \frac 12$, 
this lemma exhibits nonlinear smoothing of order $1+\eps$, 
yielding  Proposition~\ref{PROP:RA} and hence Theorem \ref{THM:quasi} for $s > 1$.

\begin{lemma}\label{LEM:Znonlin}
Let $\s> \frac 12$.
Then, we have 
\begin{align}
\| \Nf_0(v)(t) \|_{H^{\s+2}} & \les
\|v(0)\|_{H^\s}^3 + \|v (t)\|_{H^\s}^3 + t 
\sup_{t' \in [0, t]} \|v (t')\|_{H^\s}^5, 
\label{Znonlin2}\\
\| \Rf_0(v)(t) \|_{H^{3\s}} & \les
  t  \sup_{t' \in [0, t]}  \|v (t')\|_{H^\s}^3.
 \label{Znonlin3}
\end{align}
	
\end{lemma}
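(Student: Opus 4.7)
The plan is to split the two estimates, treating the resonant piece $\Rf_0(v)$ directly and the non-resonant piece $\Nf_0(v)$ via a normal form reduction (integration by parts in time) that converts the oscillation $e^{it'\phi(\bar n)}$ into a gain of $|\phi(\bar n)|^{-1}$. The whole strategy rests on the lower bound $|\phi(\bar n)| \gtrsim |n-n_1|\,|n-n_3|$ on $\Gamma(n)$, which holds under the non-resonant assumption $\frac{2\beta}{3}\notin\Z$ because then $|n_1+n_3 - \frac{2\beta}{3}| \geq \mathrm{dist}(\frac{2\beta}{3}, \Z) > 0$ (combined with the factorization in \eqref{phi1}).

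For \eqref{Znonlin3}, the resonant contribution is concentrated at a single frequency: $\widehat{\RR_0(v)}(n)(t') = i|\ft v_n(t')|^2 \ft v_n(t')$. Crudely bounding the time integral by $t$ times the supremum and using Minkowski in $n$, it suffices to observe
\[
\sum_{n\in\Z} \jb{n}^{6\s}|\ft v_n|^6 = \sum_{n\in\Z} \big(\jb{n}^{2\s}|\ft v_n|^2\big)^3 \leq \bigg(\sum_{n\in\Z} \jb{n}^{2\s}|\ft v_n|^2\bigg)^3 = \|v\|_{H^\s}^6,
\]
i.e.\ the embedding $\ell^2\hookrightarrow \ell^6$ on the weighted Fourier side. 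This requires no condition beyond $\s \geq 0$.

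For \eqref{Znonlin2}, I would integrate by parts in $t'$ inside $\Nf_0(v)(t) = \int_0^t \NN_0(v)(t')\,dt'$, writing
\[
\int_0^t e^{it'\phi(\bar n)} \ft v_{n_1}\cj{\ft v_{n_2}}\ft v_{n_3}\,dt' = \frac{1}{i\phi(\bar n)}\Big[e^{it'\phi(\bar n)}\ft v_{n_1}\cj{\ft v_{n_2}}\ft v_{n_3}\Big]_0^t - \frac{1}{i\phi(\bar n)}\int_0^t e^{it'\phi(\bar n)}\,\dt\!\big(\ft v_{n_1}\cj{\ft v_{n_2}}\ft v_{n_3}\big)\,dt'.
\]
The boundary terms at $t'=0, t$ produce trilinear expressions in $v(0)$ and $v(t)$; substituting $\dt \ft v_{n_j}$ from \eqref{3NLS3} in the remaining integral produces quintic expressions in $v(t')$, integrated in time. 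Hence the three types of contributions to \eqref{Znonlin2} arise naturally. The quintic terms are absorbed by a further application of the same trilinear estimate (their worst contribution has one factor $\ft v_{n_j}$ replaced by $\widehat{\NN_0(v)}(n_j) + \widehat{\RR_0(v)}(n_j)$, which loses no derivatives once we already have the gain $|\phi(\bar n)|^{-1}$ in front).

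The main obstacle is the trilinear estimate
\[
\bigg\|\sum_{\Gamma(n)} \frac{\ft v_{n_1}\cj{\ft v_{n_2}}\ft v_{n_3}}{\phi(\bar n)}\bigg\|_{H^{\s+2}} \lesssim \|v\|_{H^\s}^3 \qquad \text{for } \s > \tfrac12.
\]
Using $|\phi(\bar n)|^{-1} \lesssim |n-n_1|^{-1}|n-n_3|^{-1} = |n_3-n_2|^{-1}|n_1-n_2|^{-1}$ and $\jb{n}\lesssim\jb{n_1}+\jb{n_2}+\jb{n_3}$, one splits into three frequency-localized regions according to which of $|n_1|,|n_2|,|n_3|$ is largest. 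In the region $\jb{n_{\max}}\gtrsim\jb{n}$, the factor $\jb{n}^{\s+2}$ is absorbed by $\jb{n_{\max}}^\s$ up to two extra powers of $\jb{n_{\max}}$, which are dominated by the denominator since the non-maximal frequencies $\jb{n_j}$ (together with the constraint $n = n_1-n_2+n_3$) control $|n-n_1|$ and $|n-n_3|$. After Cauchy--Schwarz in the free summation variables one is left with a convergent sum of the form $\sum_m \jb{m}^{-2\s}$ and $\sum_m |m|^{-2}$, both finite for $\s > \frac12$. The condition $\s > \frac12$ is precisely what is needed to close this elementary but somewhat delicate case analysis, mirroring the trilinear estimate in \cite{OTz1} for 4NLS adapted to the weaker gain $|n-n_1|^{-1}|n-n_3|^{-1}$ available here.
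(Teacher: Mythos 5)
Your treatment of the resonant term \eqref{Znonlin3} and the overall normal form structure (integration by parts in $t'$, boundary terms cubic in $v(0)$ and $v(t)$, quintic terms from substituting \eqref{3NLS3}) coincide with the paper's proof. However, there is a genuine gap in your trilinear estimate for \eqref{Znonlin2}. You replace the phase by the lower bound $|\phi(\bar n)| \ges |n-n_1|\,|n-n_3|$, discarding the factor $|n_1+n_3-\tfrac23\be|$ after bounding it below by the constant $\mathrm{dist}(\tfrac{2\be}{3},\Z)$. This bound is correct but too lossy: in the regime where all four frequencies are comparable, that discarded factor is typically of size $n_{\max}$, and it is essential. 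Concretely, take $(n_1,n_2,n_3)=(N,N-1,N+1)$, so $n=N+2$, $|n-n_1|\,|n-n_3|=2$, while $n_{\max}\sim N$. For $v$ with $\ft v_m=1$ on $\{N-1,N,N+1\}$ and $0$ elsewhere, the boundary term in your scheme is of size $\jb{N}^{\s+2}$ in $H^{\s+2}$, whereas $\|v\|_{H^\s}^3\sim\jb{N}^{3\s}$; so the estimate you need fails unless $\s\ge 1$. Your assertion that the two leftover powers of $\jb{n_{\max}}$ are ``dominated by the denominator since the non-maximal frequencies control $|n-n_1|$ and $|n-n_3|$'' is the step that breaks: $|n-n_1|=|n_3-n_2|$ and $|n-n_3|=|n_1-n_2|$ can be $O(1)$ no matter how large all the frequencies are, so the denominator gives no lower bound in terms of $n_{\max}$.

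The paper avoids this through the dichotomy of Lemma \ref{LEM:phase}: either $|\phi(\bar n)|\ges n_{\max}^2\ld$ (a genuine gain of two derivatives plus one difference factor, after which one argues as in \cite{OTz1} using the algebra property of $H^\s$, $\s>\frac12$), or else $|n|\sim|n_1|\sim|n_2|\sim|n_3|$ and $|\phi(\bar n)|\ges n_{\max}\Ld$ with $\Ld$ a minimum of \emph{products} of two factors. In the second case one only gains a single power of $n_{\max}$ from the phase, but the comparability of the frequencies lets one write $\jb{n}^{3\s}\les\prod_{j}\jb{n_j}^{\s}$ and prove the stronger bound $\|\I(t)\|_{H^{3\s+1}}\les\|v(t)\|_{H^\s}^3$ by Cauchy--Schwarz in $(n_1,n_3)$; since $3\s+1>\s+2$ precisely when $\s>\frac12$, this recovers \eqref{Znonlin2}. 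To repair your argument you would need to reinstate the factor $|n_1+n_3-\tfrac23\be|$ and perform this case analysis rather than relying on the uniform bound $|\phi(\bar n)|\ges |n-n_1|\,|n-n_3|$ alone.
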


The proof of Lemma \ref{LEM:Znonlin}
follows closely that of Lemma 5.1 in  \cite{OTz1}.
Namely, we apply a normal form reduction to \eqref{3NLS3}
and convert  the cubic non-resonant nonlinearity into 
a quintic nonlinearity (plus cubic boundary terms).
See \eqref{Znonlin1} below.
While we had a gain of two derivatives for 4NLS \eqref{4NLS} in \cite{OTz1}, 
it is not the case for our problem due to a weaker dispersion.
See Lemma \ref{LEM:phase}.
On the other hand, 
the resonant part $\Rf_0(v)$ is trivially estimated by $\l^2_n \subset \l^6_n$
as in \cite{OTz1}.
Note that the amount of smoothing for the resonant part $\Rf_0(v)$ 
is $2\s$, imposing the regularity restriction $\s > \frac 12$
in order to have $(1+ \eps)$-smoothing.

The next lemma 
shows  nonlinear smoothing in the context of  
 the $w$-equation \eqref{3NLS5},
 where the resonant part giving the regularity restriction is now removed.
As in the case of Lemma \ref{LEM:Znonlin}, 
we perform a normal form reduction.
However, more care is needed
due to the lower regularity under consideration.

\begin{lemma}\label{LEM:Xnonlin}
Let $\frac 14 < \s \leq \frac 12 $.
Then, we have 
\begin{align*}
\| \Nf_1(w)(t) \|_{H^{\s+1+}} 
& \les
 \sum_{j = 0}^2 t^j \sup_{t' \in [0, t]} \|w (t')\|_{H^\s}^{2j+3}, \\
\| \Nf_2(w)(t) \|_{H^{\s+1+}} 
& \les
 \sum_{j = 1}^3 t^j \sup_{t' \in [0, t]} \|w (t')\|_{H^\s}^{2j+3}.
\end{align*}
	
\end{lemma}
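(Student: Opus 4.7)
The plan is to mirror the normal-form reduction used to prove Lemma~\ref{LEM:Znonlin}, so as to trade each oscillatory factor $e^{it'(\phi(\bar n) + \psi(\bar n))}$ appearing in $\NN_1(w)$ and $\NN_2(w)$ for a factor $1/\phi(\bar n)$, and then invoke Lemma~\ref{LEM:phase} to convert that division into a gain of $1+\eps$ derivatives. The two new features compared with Lemma~\ref{LEM:Znonlin} are that the resonant contribution is absent by construction (the gauge $\J_t$ was introduced precisely to eliminate it, so the entire right-hand side can be integrated by parts against the non-resonant phase), and that the total phase now carries the solution- and time-dependent piece $\psi(\bar n)$ from~\eqref{psi1}, which must be tracked carefully through the integration by parts.

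Concretely, I would factor $e^{it'(\phi+\psi)} = e^{it'\phi}\cdot e^{it'\psi(t')}$ and integrate by parts using $e^{it'\phi} = (i\phi)^{-1}\partial_{t'} e^{it'\phi}$ in the time integrals defining $\ft{\Nf_1(w)}(n)$ and $\ft{\Nf_2(w)}(n)$. For $\Nf_1$, writing $F(t') := \ft w_{n_1}\cj{\ft w_{n_2}}\ft w_{n_3}$, this produces a cubic boundary term $[\phi^{-1} e^{it'(\phi+\psi)} F]_0^t$ (the $j=0$ bucket), a quintic interior remainder coming from $\partial_{t'} e^{it'\psi} = i(\psi + t'\dt\psi)e^{it'\psi}$ of the schematic form $\phi^{-1}\cdot \psi \cdot F$ integrated in $t'$, and further interior remainders in which $\partial_{t'}$ hits either $F$ or the inner $\psi$; in the latter two cases the time derivatives $\dt \ft w_{n_j}$ and $\dt\psi$ are expanded via~\eqref{3NLS5}, so that each such derivative produces either a cubic ($\NN_1(w)$) or a $t'$-multiplied quintic ($\NN_2(w)$) factor. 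The same procedure applied to $\Nf_2(w)$ starts with one extra power of $t'$ and two extra copies of $w$, so that the boundary term at $t'=0$ vanishes and the corresponding polynomial begins at $j=1$ and extends to $j=3$.

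Each resulting multilinear Fourier expression then carries a factor $1/\phi(\bar n)$. I would use Lemma~\ref{LEM:phase} to dominate $\jb{n}^{1+\eps}/|\phi(\bar n)|$ by a tame weight on the summation set $\G(n)$ from~\eqref{Gam1}, and close with a trilinear Cauchy--Schwarz estimate of the type~\eqref{P1}, namely
\[
\bigg\|\sum_{\G(n)} |\ft w_{n_1}\ft w_{n_2}\ft w_{n_3}|\bigg\|_{\l^\infty_n} \les \|w\|_{H^\s}^3,
\]
valid for $\s > \tfrac14$; this is exactly where the regularity restriction in the lemma enters. The quintic, heptic, and nonic terms are handled the same way, with the extra factors of $\ft w$ beyond the three appearing in this trilinear sum being placed in $\l^\infty_n$ and controlled by $\|\ft w\|_{\l^\infty} \leq \|w\|_{L^2} \leq \|w\|_{H^\s}$, so that they cost no further derivative budget.

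The main obstacle will be the bookkeeping generated by the $\dt\psi$ terms: since $\psi(\bar n)$ depends both on $t'$ and on $w(t')$, each differentiation of the $\psi$-phase brings down one factor of $t'$ and, via~\eqref{3NLS5}, a full copy of the nonlinearity (possibly with its own explicit $t$-factor from $\NN_2$). Chasing all branches and verifying that every contribution lands in one of the advertised buckets $t^j \|w\|_{H^\s}^{2j+3}$ --- rather than in a strictly higher-order one that would require iterating the normal form reduction a second time --- is the delicate part, especially since the dispersion being only of third order means Lemma~\ref{LEM:phase} yields a gain of only $1+\eps$ derivatives (as against the two derivatives available in the $H^{\s+2}$ bound of Lemma~\ref{LEM:Znonlin}), forcing the trilinear estimate to be run at its critical threshold $\s > \tfrac14$.
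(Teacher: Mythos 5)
Your normal-form skeleton is exactly the paper's: integrate by parts against $e^{it'\phi}$ via $\dt(e^{it'\phi}/(i\phi))$, keep the boundary terms, and expand $\partial_{t'}e^{it'\psi} = i(\psi + t'\dt\psi)e^{it'\psi}$ together with the $\dt\ft w_{n_j}$ terms through \eqref{3NLS5}; the bookkeeping into the buckets $t^j\|w\|_{H^\s}^{2j+3}$, the vanishing of the $t'=0$ boundary term for $\Nf_2$, and the use of $\|\ft w_n\|_{\l^\infty_n}\leq\|w\|_{L^2}$ for the spectator factors all match \eqref{Xnonlin1}--\eqref{Ynonlin1}. The gap is in the closing estimate. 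You attribute the restriction $\s>\frac14$ to the trilinear bound $\|\sum_{\G(n)}|\ft w_{n_1}\ft w_{n_2}\ft w_{n_3}|\|_{\l^\infty_n}\les\|w\|_{H^\s}^3$, but that bound is just $\|\,|w|^2w\,\|_{L^1}\les\|w\|_{L^3}^3\les\|w\|_{H^{1/6}}^3$ and already holds for $\s\geq\frac16$ (this is \eqref{XX1}; the $\s>\frac14$ in \eqref{P1} comes from the interpolation against $L^2$ needed only in Method~2, which is irrelevant here since Ramer's argument lets you put every factor in $H^\s$). So your proposed mechanism does not actually produce the stated threshold, and more importantly it does not close in the genuinely critical regime.

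That regime is Case (ii) of Lemma \ref{LEM:phase}: $|n|\sim|n_1|\sim|n_2|\sim|n_3|$ with only $|\phi(\bar n)|\ges n_{\max}\Ld$, so that $\jb{n}^{\s+1+}/|\phi(\bar n)|\les \jb{n}^{\s+}/(|n-n_1||n-n_3|)$ retains a growing factor $\jb{n}^{\s+}$. If you simply place the inner sum over $\G(n_1)$ in $\l^\infty_{n_1}$ and Cauchy--Schwarz the rest, the kernel $1/(|n-n_1||n-n_3|)$ gives only a uniform-in-$n$ bound, not an $\l^2_n$ bound, so the estimate does not close. The paper instead keeps the weight $\jb{n_1}^{\s-}$ on the inner sum, sums first in $n_3$ and then in $n$, and is left with an $\l^{2/(1-2\s+)}_{n_1}$ norm of the trilinear convolution, which it controls by Young's and H\"older's inequalities, landing on $\|w\|_{H^{(1-\s+)/3}}^3$; the condition $\frac{1-\s+}{3}\leq\s$ is precisely what forces $\s>\frac14$. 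Your proposal is missing this step (and the analogous case splitting according to which factor of $\Ld$ is used), which is the actual heart of the lemma; Case (i), where $|\phi|\ges n_{\max}^2\ld$, is handled essentially as you describe.
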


We present the proofs of Lemmas \ref{LEM:Znonlin} and \ref{LEM:Xnonlin} 
in the next subsections.
Before proceeding to the proofs of the nonlinear estimates, 
we state the following elementary lemma on the phase function $\phi(\bar n)$
under the non-resonant assumption  $\frac 23 \be \notin\Z$.

\begin{lemma}\label{LEM:phase}
 Let $\phi(\bar n)$ and $\G(n)$ be as in \eqref{phi1} and \eqref{Gam1}.
Then,  one of the following holds on $\G(n)$\textup{:}

\smallskip

\noi\begin{itemize}
\item[\textup{(i)}]
With $n_{\max} = \max(|n|, |n_1|, |n_2|, |n_3|)$, we have
\begin{align}
&|\phi(\bar n)| \ges n_{\max}^2 \ld,
\label{phi3} 
\end{align}

\noi
where $\ld = \min\big( |n - n_1|,  |n- n_3|,  |n_1 + n_3 - \tfrac23\be|\big)$.

\smallskip

\item[\textup{(ii)}]
$|n| \sim |n_1|\sim |n_2|\sim|n_3|$
and 
\begin{align}
&|\phi(\bar n)| \ges n_{\max} \Ld,
\label{phi4}
\end{align}

\noi
where $\Ld = \min\big( |n - n_1|  |n- n_3|,  
 |n- n_3||n_1 + n_3 - \tfrac23\be|, 
|n - n_1|   |n_1 + n_3 - \tfrac23\be|\big)
$.

\end{itemize}

\end{lemma}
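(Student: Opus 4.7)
The starting point is the factorization displayed in \eqref{phi1}, which I rewrite as $\phi(\bar n) = 3ABC$ with
\[
A := n - n_1, \qquad B := n - n_3, \qquad C := n_1 + n_3 - \tfrac{2}{3}\beta.
\]
Using the constraint $n + n_2 = n_1 + n_3$, direct computation yields the four identities
\begin{align*}
A + B + C &= 2n - \tfrac{2}{3}\beta, & -A + B + C &= 2n_1 - \tfrac{2}{3}\beta, \\
A - B + C &= 2n_3 - \tfrac{2}{3}\beta, & A + B - C &= -2n_2 + \tfrac{2}{3}\beta.
\end{align*}
In particular $\max(|A|, |B|, |C|) \gtrsim n_{\max}$, with an implicit constant depending only on $\beta$. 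Since the restrictions defining $\G(n)$ together with $\tfrac{2}{3}\beta \notin \Z$ force $|A|, |B| \geq 1$ and $|C| \geq \mathrm{dist}(\tfrac{2}{3}\beta, \Z) > 0$, the claimed bounds are trivial whenever $n_{\max}$ is bounded, so I may restrict attention to the regime where $n_{\max}$ is large.

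The plan is then to trichotomize according to how many of $|A|, |B|, |C|$ are of size $\gtrsim n_{\max}$. If at least two of them are, then
\[
|\phi(\bar n)| = 3|A||B||C| \gtrsim n_{\max}^2 \min(|A|, |B|, |C|) = n_{\max}^2 \lambda,
\]
which is conclusion (i). The remaining possibility is that exactly one of the three factors is $\gtrsim n_{\max}$ while the other two are much smaller. The representative subcase, $|A| \gtrsim n_{\max}$ with $|B|, |C| \ll n_{\max}$, proceeds as follows: the identities $A + B = n - n_2$ and $A - B = n_3 - n_1$ give $|n - n_2|,\, |n_1 - n_3| \sim n_{\max}$, while $|C| \ll n_{\max}$ gives $|n_1 + n_3| \leq |C| + \tfrac{2}{3}|\beta| \ll n_{\max}$ and hence $|n + n_2| = |n_1 + n_3| \ll n_{\max}$. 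These four estimates force $|n| \sim |n_1| \sim |n_2| \sim |n_3| \sim n_{\max}$, and then
\[
|\phi(\bar n)| = 3|A||B||C| \gtrsim n_{\max} \cdot |B||C| \geq n_{\max}\Lambda,
\]
which is conclusion (ii). The subcase where $|B|$ alone is dominant follows by the symmetry $n_1 \leftrightarrow n_3$; the subcase where $|C|$ alone is dominant is handled in the same spirit, since $|A|, |B| \ll n_{\max}$ first yields $n \sim n_1 \sim n_3$, and then $|C| \sim n_{\max}$ forces $|n_1 + n_3|$ and therefore all four frequencies to be $\sim n_{\max}$.

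No genuine obstacle is anticipated: the argument is entirely algebraic, resting on the factorization \eqref{phi1}, the four linear identities above, and a short case check. The non-resonance hypothesis $\tfrac{2}{3}\beta \notin \Z$ enters only through the uniform positive lower bound on $|C|$, which takes care of the bounded-$n_{\max}$ regime. The slightly delicate bookkeeping step is verifying, in each of the three "lone dominant factor" subcases, that all four frequencies collapse to the same scale; but this reduces in every case to the elementary identities $A + B = n - n_2$, $A - B = n_3 - n_1$ together with $n + n_2 = n_1 + n_3$.
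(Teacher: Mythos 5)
Your argument is correct and follows exactly the route the paper intends: the paper dismisses this lemma as ``immediate from the factorization in \eqref{phi1}'', and your case analysis on how many of the three factors $n-n_1$, $n-n_3$, $n_1+n_3-\tfrac23\beta$ are comparable to $n_{\max}$ (using the four linear identities to see that at least one always is, and that a lone dominant factor forces $|n|\sim|n_1|\sim|n_2|\sim|n_3|$) is precisely the bookkeeping being suppressed. The only point worth stating explicitly in a write-up is the fixed threshold $\eps n_{\max}$ defining ``large'' versus ``small'' factors, with $\eps$ depending only on $\beta$.
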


The proof of Lemma \ref{LEM:phase} is immediate
from the factorization in \eqref{phi1}.
See also \cite[(8.21), (8.22)]{BO93no2} for a similar property of the phase function
for the modified KdV equation.

\subsection{Nonlinear estimate: Part 1}
In this subsection, we present the proof of Lemma~\ref{LEM:Znonlin}.
Fix $\s > \frac 12$.
By writing \eqref{3NLS3} in the integral form, we have
\begin{align*}
\ft v_n (t) 
 & = \ft v_n(0) 
 -i \int_0^t \sum_{\G(n)} 
 e^{it' \phi(\bar n) } \ft v_{n_1}\cj{\ft v_{n_2}}\ft v_{n_3}(t') dt' 
 +i \int_0^t | \ft v_{n}|^2\ft v_{n}(t') dt'\notag\\
&  = : \ft v_n(0) +\ft{\Nf_0(v)}(n, t)+\ft{\Rf_0(v)}(n, t).
\end{align*}

\noi
In view of Lemma \ref{LEM:phase}, 
we have a non-trivial oscillation 
caused by 
the phase function $\phi(\bar n)$
in 
the non-resonant part $\Nf_0(v)$. 
We exploit this fast oscillation by 
a normal form reduction, 
i.e.~integrating by parts:
\begin{align}
\ft {\Nf_0(v)}(n, t) 
& =  - \sum_{\G( n)}
\frac{ e^{it' \phi (\bar n) } }{\phi(\bar n)}
\ft v_{n_1}(t')\cj{\ft v_{n_2}(t')}\ft v_{n_3}(t')\bigg|_{t' = 0}^t 
+ \sum_{\G( n)}  \int_0^t 
\frac{ e^{it' \phi(\bar n) } }{\phi(\bar n)}
\dt(  \ft v_{n_1}\cj{\ft v_{n_2}}\ft v_{n_3})(t') dt' \notag\\
& =  -\sum_{\G(n)}
\frac{ e^{i t \phi( \bar n) } }{\phi(\bar n)}
\ft v_{n_1}(t)\cj{\ft v_{n_2}(t)}\ft v_{n_3}(t) 
+ \sum_{\G( n)}
\frac{ 1}{\phi(\bar n)}
\ft v_{n_1}(0)\cj{\ft v_{n_2}(0)}\ft v_{n_3}(0) \notag \\
& \hphantom{X}
+ 2    \int_0^t \sum_{\G(n)}
\frac{ e^{i t'\phi(\bar n) } }{\phi(\bar n)}
\big\{ \ft{\NN_0(v)}(n_1) + \ft{\RR_0(v)}(n_1)\big\}\cj{\ft v_{n_2}}\ft v_{n_3}(t') dt' \notag\\
& \hphantom{X}
+   \int_0^t \sum_{\G( n)}
\frac{ e^{i t' \phi(\bar n)  } }{\phi(\bar n)}
\ft  v_{n_1}\cj{\big\{\ft  {\NN_0(v)}(n_2) + \ft {\RR_0(v)}(n_2)\big\}}\ft v_{n_3}(t') dt'\notag\\
&  =:  \ft \I(n, t) - \ft  \I(n, 0)
+  \ft \II(n, t) +  \ft \III(n,  t). 
 \label{Znonlin1}
\end{align}

\noi
In view of Lemma \ref{LEM:phase}, 
the phase function $\phi(\bar n)$ appearing in the denominators
allows us to exhibit a smoothing for $\Nf_0(v)$.
In the computation above, we formally 
switched the order of the time integration
and the summation.
Moreover, we applied the product rule in time differentiation
at the second equality.
These steps can be justified,
provided $\s \geq \frac 16$.  See~\cite{OTz1} for details.

We now present the proof of 
Lemma \ref{LEM:Znonlin}.

\begin{proof}[Proof of Lemma \ref{LEM:Znonlin}]

We first estimate the non-resonant term $\Nf_0(v)$ in \eqref{Znonlin2}.
If  $\phi(\bar n)$ satisfies \eqref{phi3} in Lemma \ref{LEM:phase}, 
then we can proceed as in the proof of Lemma 5.1 in \cite{OTz1}
and establish \eqref{Znonlin2} 
since the proof of Lemma 5.1 in \cite{OTz1} only requires two gains of derivative from 
the phase function $\phi(\bar n)$ and the algebra property of $H^\s(\T)$, $\s > \frac 12$.
Moreover, the resonant term $\Rf_0(v)$ in \eqref{Znonlin3} 
can be estimated exactly as in \cite{OTz1} with $\l^2_n \subset \l^6_n$.
Hence, it remains to prove \eqref{Znonlin2}
under the assumption that 
 $\phi(\bar n)$ satisfies \eqref{phi4} in Lemma \ref{LEM:phase}.
In this case, we have less gain of derivative from $\phi(\bar n)$ in the denominator
and hence we need to proceed with more care.
Without loss of generality, assume
\begin{align*}
|\phi(\bar n)| \ges n_{\max}  |n - n_1|  |n- n_3|.
\end{align*}

\noi
Recall that we have $|n| \sim |n_1|\sim |n_2|\sim|n_3|$ in this case.

We  first consider the term $\I$.
It follows from Cauchy-Schwarz inequality that 
\begin{align}
\| \I (t)\|_{H^{3\s+1}} 
& 
\les \bigg\| \jb{n}^{3\s} \sum_{\G(n)}
\frac{1}{|n - n_1|  |n- n_3|}\prod_{j = 1}^3 |\ft v_{n_j}(t)| \bigg\|_{\l^2_n} \notag \\
& 
\les 
\sup_{n \in \Z} \bigg(\sum_{\G(n)}
\frac{1}{|n - n_1|^2  |n- n_3|^2}\bigg)^\frac{1}{2}
 \| v(t)\|_{H^\s}^3\notag \\
&  \les  \| v(t)\|_{H^\s}^3.
\label{Znonlin4}
\end{align}


Next, we consider $\II$.
The contribution $\II_\text{res}$ from $\RR_0(v)$ can be estimated as in \eqref{Znonlin4}.
With \eqref{3NLS3}, Cauchy-Schwarz inequality, and $\l^2_n \subset \l^6_n$, we have
\begin{align*}
\| \II_\text{res} (t)\|_{H^{5\s+1}} 
& 
\les t \sup_{t'\in [0, t]} 
\bigg\| \jb{n}^{5\s} \sum_{\G(n)}
\frac{1}{|n - n_1|  |n- n_3|}
|\ft v_{n_1}(t')|^3
\prod_{j = 2 }^3 |\ft v_{n_j}(t')| \bigg\|_{\l^2_n} \notag \\
& 
\les t\, 
\sup_{n \in \Z} \bigg(\sum_{\G(n)}
\frac{1}{|n - n_1|^2  |n- n_3|^2}\bigg)^\frac{1}{2}
\cdot  \sup_{t'\in [0, t]}  \| v(t')\|_{H^\s}^5\notag \\
&  \les  t \sup_{t'\in [0, t]}  \| v(t')\|_{H^\s}^5.
\end{align*}

\noi
We now estimate 
the contribution $\II_\text{nr}$ from $\NN_0(v)$ in $\II$.
Proceeding as in \eqref{Znonlin4} with the algebra property of $H^\s(\T)$, $\s > \frac 12$, we have
\begin{align*}
\| \II_\text{nr}(t) \|_{H^{3\s+1}} 
& 
\les t \sup_{t'\in [0, t]} 
\bigg\| \jb{n}^{3\s} \sum_{\substack{n = n_1 - n_2 + n_3\\ n \ne n_1, n_3} }
\frac{1}{|n - n_1|  |n- n_3|}
|\ft {\NN_0(v)}(n_1, t')|
\prod_{j = 2 }^3 |\ft v_{n_j}(t')| \bigg\|_{\l^2_n} \notag \\
& 
\les 
 t \sup_{t'\in [0, t]} \|  \NN_0(v)(t')\|_{H^\s} \| v(t')\|_{H^\s}^2\notag \\
&  \les  t \sup_{t'\in [0, t]}  \| v(t')\|_{H^\s}^5.
\end{align*}

\noi
The fourth term $\III$ in \eqref{Znonlin1} can be estimated in an analogous manner.
Finally, 
 by noting that $3 \s + 1 > \s + 2$ for $\s > \frac 12$, 
we conclude that the estimate \eqref{Znonlin2} holds for $\s > \frac 12$.
\end{proof}

\subsection{Nonlinear estimate: Part 2}
In this subsection, we present the proof of Lemma~\ref{LEM:Xnonlin}.
Fix $\s > \frac 14$.
By writing \eqref{3NLS5} in the integral form, we have
\begin{align}
\ft w_n (t) 
 & = \ft w_n(0) 
+ \int_0^t \ft{\NN_1(w)} (n, t') dt'
+  \int_0^t \ft{\NN_2(w)} (n, t') dt'\notag\\
&  = : \ft w_n(0) +\ft{\Nf_1(w)}(n, t)
+ \ft{\Nf_2(w)}(n, t).
\label{XNLS1}
\end{align}

\noi
As in the previous subsection, 
our basic strategy is to apply  a normal form reduction.
The main difference comes from the time dependent nature of 
the phase function $\psi(\bar n)$ in \eqref{psi1}.

For a short-hand notation, we set
\[ \ta(\bar n) = \phi(\bar n) + \psi (\bar n)\]

\noi
and
\begin{align}
\Xi(n, t) = 2\Im \bigg(\sum_{(m_1, m_2, m_3) \in \G(n)}
 e^{i t \ta(n, \bar m )}
 \ft w_{m_1}\cj{\ft w_{m_2}}\ft w_{m_3}
\cj{\ft w_{n}}\bigg),
\label{xi1}
\end{align}

\noi
where $(n, \bar m) = (n, m_1, m_2,m_3)$.
Then, 
from \eqref{3NLS5}, 
we have
\begin{align}
\ft {\Nf_1(w)}(n, t) 
& = 
-i \int_0^t \sum_{\G(n)} \dt \bigg(\frac{e^{i t' \phi(\bar n)}}{i \phi(\bar n)}\bigg) e^{i t' \psi(\bar n)} \ft  w_{n_1}\cj{\ft w_{n_2}}\ft w_{n_3}(t') dt' \notag\\
&  = -\sum_{\G(n)}
\frac{ e^{i t \ta( \bar n) } }{\phi(\bar n)}
\ft w_{n_1}(t)\cj{\ft w_{n_2}(t)}\ft w_{n_3}(t) 
+ \sum_{\G( n)}
\frac{ 1}{\phi(\bar n)}
\ft w_{n_1}(0)\cj{\ft w_{n_2}(0)}\ft w_{n_3}(0) \notag \\
& \hphantom{X} 
+i \int_0^t \sum_{\G(n)}
\frac{ e^{i t \ta( \bar n) } }{\phi(\bar n)}
\big\{ \psi(\bar n) + t' \dt \psi(\bar n)\big\} \ft  w_{n_1}\cj{\ft w_{n_2}}\ft w_{n_3}(t') dt' \notag\\
& \hphantom{X}
- 2i     \int_0^t \sum_{\substack{(n_1, n_2, n_3) \in \G(n)\\(m_1, m_2, m_3) \in \G(n_1)}}
\frac{ e^{i t' (\ta(\bar n) + \ta(n_1, \bar m ))} }{\phi(\bar n)}
( \ft w_{m_1}\cj{\ft w_{m_2}}\ft w_{m_3})
\cj{\ft w_{n_2}}\ft w_{n_3}(t') dt' \notag\\
& \hphantom{X}
+   i \int_0^t \sum_{\substack{(n_1, n_2, n_3) \in \G(n)\\(m_1, m_2, m_3) \in \G(n_2)}}
\frac{ e^{i t' (\ta(\bar n) - \ta(n_2, \bar m ))} }{\phi(\bar n)}
\ft  w_{n_1}(\cj{ \ft w_{m_1}}\ft w_{m_2}\cj{\ft w_{m_3}})
\ft w_{n_3}(t') dt'\notag\\
& \hphantom{X}
- 2i     \int_0^t t' 
\sum_{ \G(n)}
\frac{ e^{i t' \ta(\bar n) } }{\phi(\bar n)}
\Xi(n_1, t')
\ft w_{n_1} \cj{\ft w_{n_2}}\ft w_{n_3}
(t') dt' \notag\\
& \hphantom{X}
+  i  \int_0^t t' 
\sum_{ \G(n)}
\frac{ e^{i t' \ta(\bar n) } }{\phi(\bar n)}
\Xi(n_2, t')\ft  w_{n_1}\cj{\ft w_{n_2}}\ft w_{n_3}(t') dt'\notag\\
&  =:  \ft \I(n,  t) -  \ft  \I(n, 0)  + \ft \II(n,  t) +  \ft  \III_1(n,  t) \notag\\
& \hphantom{X}+ \ft  \III_2(n,  t) + \ft  \IV_1(n,  t) + \ft  \IV_2(n,  t).
\label{Xnonlin1}
\end{align}

\noi
As in \eqref{Znonlin1}, 
switching the order of  the time integration
and the summation in the computation above
can be justified for $\s \geq \frac 16$.
 See~\cite{OTz1}.
Similarly, we have 
\begin{align}
\ft {\Nf_2(w)}(n, t) 
& = 
-2 i \int_0^t t' \ft w_n \Im\bigg( \sum_{\G(n)} \dt \bigg(\frac{e^{i t' \phi(\bar n)}}{i \phi(\bar n)}\bigg) e^{i t' \psi(\bar n)} 
\ft  w_{n_1}\cj{\ft w_{n_2}}\ft w_{n_3}\cj{\ft w_n}\bigg)(t') dt' \notag\\
& = 
2 i  t \, \ft w_n \Re\bigg( \sum_{\G(n)} 
\frac{e^{i t \ta(\bar n)}}{  \phi(\bar n)}
\ft  w_{n_1}\cj{\ft w_{n_2}}\ft w_{n_3}\cj{\ft w_n}\bigg)(t)  \notag\\
& \hphantom{X} 
-2 i  \int_0^t \ft w_n \Re \bigg( \sum_{\G(n)} \frac{e^{i t' \ta(\bar n)}}{ \phi(\bar n)} e^{i t' \psi(\bar n)} 
\ft  w_{n_1}\cj{\ft w_{n_2}}\ft w_{n_3}\cj{\ft w_n}\bigg)(t') dt' \notag\\
& \hphantom{X} 
+ 2 i \int_0^t t' \ft w_n \Im\bigg( \sum_{\G(n)} 
\frac{e^{i t' \ta(\bar n)}}{ \phi(\bar n)}
\big\{ \psi(\bar n) + t' \dt \psi(\bar n)\big\}
\ft  w_{n_1}\cj{\ft w_{n_2}}\ft w_{n_3}\cj{\ft w_n}\bigg)(t') dt' \notag\\
& \hphantom{X} 
-2 i  \int_0^t t' (\dt \ft w_n) \Re \bigg( \sum_{\G(n)} \frac{e^{i t' \ta(\bar n)}}{ \phi(\bar n)} e^{i t' \psi(\bar n)} 
\ft  w_{n_1}\cj{\ft w_{n_2}}\ft w_{n_3}\cj{\ft w_n}\bigg)(t') dt' \notag\\
& \hphantom{X} 
-4 i  \int_0^t t' \ft w_n \Re \bigg( \sum_{\G(n)} \frac{e^{i t' \ta(\bar n)}}{ \phi(\bar n)} e^{i t' \psi(\bar n)} 
(\dt \ft  w_{n_1}) \cj{\ft w_{n_2}}\ft w_{n_3}\cj{\ft w_n}\bigg)(t') dt' \notag\\
& \hphantom{X} 
-2 i  \int_0^t t' \ft w_n \Re \bigg( \sum_{\G(n)} \frac{e^{i t' \ta(\bar n)}}{ \phi(\bar n)} e^{i t' \psi(\bar n)} 
\ft  w_{n_1} (\cj{\dt \ft w_{n_2}})\ft w_{n_3}\cj{\ft w_n}\bigg)(t') dt' \notag\\
& \hphantom{X} 
-2 i  \int_0^t t' \ft w_n \Re \bigg( \sum_{\G(n)} \frac{e^{i t' \ta(\bar n)}}{ \phi(\bar n)} e^{i t' \psi(\bar n)} 
\ft  w_{n_1} \cj{ \ft w_{n_2}}\ft w_{n_3}(\cj{\dt \ft w_n})\bigg)(t') dt' \notag\\
&  =:  \ft{\wt \I}(n,  t) + \ft {\wt  \II}(n,  t) + \ft{ \wt \III}(n,  t)  +  \ft {\wt \IV_0}(n,  t) \notag\\
& \hphantom{X} 
 +  \ft {\wt \IV_1} (n,  t)+  \ft{\wt \IV_2}(n,  t) + \ft{\wt  \IV_3}(n,  t), 
\label{Ynonlin1}
\end{align}

\noi
Modulo the extra phase factor $\psi(\bar n)$, 
the terms $\I, \III_1$, and $\III_2$ in~\eqref{Xnonlin1}
already  appear in~\eqref{Znonlin1}.
While the other terms in \eqref{Xnonlin1} and \eqref{Ynonlin1} are new, 
it turns out that they can be estimated
in a similar manner to $\I, \III_1$, and $\III_2$
with small modifications.

We now present the proof of 
Lemma~\ref{LEM:Xnonlin}.

\begin{proof}[Proof of Lemma \ref{LEM:Xnonlin}]
In the following, 
we first estimate  the terms $\I, \III_1$, and $\III_2$ in \eqref{Xnonlin1}.
We then show how the estimates for the other terms in \eqref{Xnonlin1}
and \eqref{Ynonlin1} follow from  those
for $\I, \III_1$, and $\III_2$.

\smallskip

\noi
{\bf Main argument:}
We first consider $\I$ in \eqref{Xnonlin1}.
If $\phi(\bar n)$ satisfies \eqref{phi4}, 
then we estimate $\I$ as in~\eqref{Znonlin4}.
Next, suppose that $\phi(\bar n)$ satisfies \eqref{phi3}.
Without loss of generality, assume that 
\begin{align}
|\phi(\bar n)| \ges n_{\max}^2 |n-n_1|.
\label{phi6}
\end{align}

\noi
Then, by Cauchy-Schwarz inequality
with $\jb{n}^{\s} \les \max_{j = 1, 2, 3}\jb{n_j}^{\s}$ for $\s \geq 0$, we have
\begin{align}
\| \I (t)\|_{H^{\s+1+}} 
& 
\les \bigg\| \jb{n}^{\s} \sum_{\substack{n = n_1 - n_2 + n_3\\ n \ne n_1, n_3} }
\frac{1}{|n - n_1| n_{\max}^{1-}}\prod_{j = 1}^3 |\ft w_{n_j}(t)| \bigg\|_{\l^2_n} \notag \\
& 
\les 
\sup_{n \in \Z} \bigg(\sum_{\substack{n = n_1 - n_2 + n_3\\ n \ne n_1, n_3} }
\frac{1}{|n - n_1|^2  n_{\max}^{2-}}\bigg)^\frac{1}{2}
 \| w(t)\|_{H^\s}^3\notag \\
&  \les  \| w(t)\|_{H^\s}^3.
\label{Ynonlin1a}
\end{align}


Next, we consider the fourth term $\III_1$ in \eqref{Xnonlin1}.
The fifth term $\III_2$ in \eqref{Xnonlin1} can be estimated in an analogous manner.
In the following, we only estimate the integrand of $\III_1$.
With  abuse of notation, we also denote the integrand as $\III_1$.

\smallskip

\noi
$\bullet$ {\bf Case (i):}
$\phi(\bar n)$ satisfies \eqref{phi3}.
\\
\indent
We first consider the case that \eqref{phi6} holds.
In this case, 
for $\frac 16 < \s \leq \frac 12$, 
we have
\begin{align}
\frac{\jb{n}^{\s+1+}}{|\phi(\bar n)|}\frac{1}{\jb{n_1}^{\s - \frac 16}\jb{n_2}^{\s}\jb{n_3}^{\s}}
\les \frac{1}{\jb{n}^{\frac 12+}  |n-n_1|^{1-}  \jb{n_{2}}^{\s}\jb{n_{3}}^{\frac{1}{2}+}}.
\label{XX1a}
\end{align}
	
\noi
By the triangle inequality: $\jb{n_1}^{\s - \frac 16}
\les \max_{j = 1, 2, 3}\jb{m_j}^{\s - \frac 16}$
and 
Sobolev's inequality, we have 
\begin{align}
\bigg\| \jb{n_1}^{\s - \frac{1}{6}}\sum_{(m_1, m_2, m_3) \in \G(n_1)}
 \ft w_{m_1}\cj{\ft w_{m_2}}\ft w_{m_3}
\bigg\|_{\l^\infty_{n_1}} \les
 \| w \|_{H^\frac{1}{6}}^2
  \| w \|_{H^\s}
  \leq 
 \| w \|_{H^\s}^3
\label{XX1}
\end{align}

\noi
for $\s \geq \frac 16$.
Then, 
it follows from Cauchy-Schwarz inequality with  \eqref{XX1a} and \eqref{XX1} that 
\begin{align}
\| \III_1 \|_{H^{\s+1+}}
& \les \| w \|_{H^\s}^3
\bigg\|
 \sum_{(n_1, n_2, n_3) \in \G(n)}
\frac{1}{\jb{n}^{\frac 12+}  |n-n_1|^{1-}  \jb{n_{2}}^{\s}\jb{n_{3}}^{\frac{1}{2}+}}
\prod_{j = 2}^3 \jb{n_j}^\s |\ft w_{n_j}|\bigg\|_{\l^2_n} \notag\\
& \les \| w \|_{H^{\s}}^5
\bigg(\sum_{n, n_3\in \Z}
 \sum_{\substack{n_2 \in \Z\\n_2 \ne n_3}}
\frac{1}{\jb{n}^{1+} |n_2 - n_3|^{2-}  \jb{n_{2}}^{2\s}\jb{n_{3}}^{1+}}
\bigg)^\frac{1}{2}\notag 
\intertext{By first summing in $n_2$, then in $n_3$ and $n$,}
& 
 \les \| w \|_{H^{\s}}^5
\label{XX2}
\end{align}

\noi
for $\frac 16 < \s \leq \frac 12$.
The upper bound $\s \leq \frac 12$ is by no means sharp but it suffices
for our purpose.

Next, suppose that
\begin{align*}
|\phi(\bar n)| \ges n_{\max}^2 |n_1 + n_3 - \tfrac 23\be|
= n_{\max}^2 |n + n_2 - \tfrac 23\be|.
\end{align*}

\noi
In this case, we have 
\begin{align*}
\frac{\jb{n}^{\s+1+}}{|\phi(\bar n)|}\frac{1}{\jb{n_2}^{\s}\jb{n_3}^{\s}}
\les \frac{1}{\jb{n}^{\frac 12+}  \jb{n + n_2 - \tfrac 23\be}^{1-} \jb{n_{2}}^{\s-}\jb{n_{3}}^{\frac{1}{2}+}} .
\end{align*}
	
\noi
Then, we can repeat a computation analogous to \eqref{XX2}
once we notice 
\begin{align*}
\bigg(\sum_{n, n_3\in \Z}
 \sum_{n_2 \in \Z}
\frac{1}{\jb{n}^{1+}   \jb{n + n_2 -\tfrac 23\be}^{2-} \jb{n_{2}}^{2\s-}\jb{n_{3}}^{1+}} 
\bigg)^\frac{1}{2}< \infty.
\end{align*}

\noi
Similarly, we can handle the case
\begin{align*}
|\phi(\bar n)| \ges n_{\max}^2 |n-n_3|
\end{align*}

\noi
by noting
\begin{align*}
\frac{\jb{n}^{\s+1+}}{|\phi(\bar n)|}\frac{1}{\jb{n_2}^{\s}\jb{n_3}^{\s}}
\les \frac{1}{\jb{n}^{\frac 12+}  \jb{n -n_3}^{1-} \jb{n_{2}}^{\frac{1}{2}+}\jb{n_{3}}^{\s-}} 
\end{align*}

\noi
and 
\begin{align*}
\bigg(\sum_{n, n_2\in \Z}
 \sum_{n_3 \in \Z}
\frac{1}{\jb{n}^{1+}   \jb{n -n_3}^{2-} \jb{n_{2}}^{1+}\jb{n_{3}}^{2\s-}} 
\bigg)^\frac{1}{2}< \infty.
\end{align*}

\smallskip

\noi
$\bullet$ {\bf Case (ii):}
$\phi(\bar n)$ satisfies \eqref{phi4}.
\\
\indent
In this case, we have $|n| \sim |n_1| \sim |n_2| \sim |n_3|$.
We first consider the case
\[|\phi(\bar n)| \ges n_{\max} |n-n_1| |n-n_3|.\]

\noi
By Cauchy-Schwarz inequality, we have
\begin{align}
\| \III_1 \|_{H^{\s+1+}}
& \les 
\bigg\|
 \sum_{(n_1, n_2, n_3) \in \G(n)}
\frac{1}{\jb{n_1}^{\s-}  |n-n_1| |n-n_3|}\notag\\
& \hphantom{XXXXXXXX}
\times  \sum_{(m_1, m_2, m_3) \in \G(n_1)}
\prod_{i = 1}^3|\ft w_{m_i}|
\prod_{j = 2}^3 \jb{n_j}^\s |\ft w_{n_j}|\bigg\|_{\l^2_n} \notag\\
& \les \| w \|_{H^{\s}}^2
\bigg\|
\frac{1}{\jb{n_1}^{\s-}  \jb{n-n_1} \jb{n-n_3}}
 \sum_{(m_1, m_2, m_3) \in \G(n_1)}
\prod_{i = 1}^3|\ft w_{m_i}|\bigg\|_{\l^2_{n, n_1, n_3}}\label{XX3}
\intertext{By summing in $n_3$ and then in $n$
and
applying H\"older inequality (in $n_1$) and Young's inequality
(with $\frac{1-2\s+}{2} + 2 = \frac{1}{q}+\frac{1}{q}+\frac{1}{q}$),}
& \les \| w \|_{H^{\s}}^2
\bigg\| \sum_{(m_1, m_2, m_3) \in \G(n_1)}
\prod_{i = 1}^3|\ft w_{m_i}|\bigg\|_{\l^{\frac{2}{1-2\s+}}_{n_1}}
\les \| w \|_{H^{\s}}^2
\| \ft w_n \|_{\l^q_n}^3  \notag
\intertext{By H\"older's inequality,} 
& 
\les \| w \|_{H^{\s}}^2
\big(\| \jb{n}^{-\frac{2-q}{2q}-}\|_{\l^\frac{2q}{2-q}_n} 
\| \jb{n}^{\frac{2-q}{2q}+} \ft w_n\|_{\l^2_n}\big)^3\notag\\
& \les \| w \|_{H^{\s}}^2
 \| w \|_{H^{\frac{1-\s+}{3}}}^3 
\leq \| w \|_{H^{\s}}^5, \notag
\end{align}

\noi
provided that 
$\frac{1-\s+}{3} \leq \s$.
This 
gives  the regularity restriction
$\s > \frac 14$
stated in the hypothesis.

When $|\phi(\bar n)| \ges n_{\max} |n-n_1| |n_1+ n_3 - \tfrac{2}{3}\be|$
or
$|\phi(\bar n)| \ges n_{\max} |n-n_3| |n_1+ n_3 - \tfrac{2}{3}\be|$, 
we can proceed as in the computation above
but we need to first sum in $n$ and then in $n_3$ the corresponding factors
at \eqref{XX3}.

\smallskip

\noi
{\bf Remaining terms:}
We now estimate the remaining terms.
The main idea is to reduce the estimates to the main argument
presented above (for $\I, \III_1$, and $\III_2$)
by noticing that 
the extra factors appearing in the remaining terms are all bounded in the $\l^\infty_n$-norm.
From \eqref{psi1}, we have
\begin{align}
\| \psi(\bar n)\|_{\l^\infty_{\bar n}}
\les \| \ft w_n\|_{\l^\infty_n}^2
\leq \|  w\|_{L^2}^2, 
\label{Ynonlin2}
\end{align}

\noi
where  $\l^\infty_{\bar n} = \l^\infty_{n, n_1, n_2, n_3}$.
On the one hand,  it follows from 
\eqref{gauge5a}, \eqref{psi1}, and \eqref{xi1} that 
\begin{align}
\dt \psi(\bar n) = - \Xi(n) + \Xi(n_1)- \Xi(n_2)+ \Xi(n_3).
\label{Ynonlin2a}
\end{align}

\noi
On the other hand, 
from \eqref{XX1} with $\s = \frac 16$, we have
\begin{align}
\| \Xi(n) \|_{\l^\infty_{ n}}
\les \| \ft w_n\|_{\l^\infty_n} \|w \|_{H^\frac{1}{6}}^3
\leq\|w \|_{H^\frac{1}{6}}^4 .
\label{Ynonlin2b}
\end{align}

\noi
Combining \eqref{Ynonlin2a} and \eqref{Ynonlin2b}, we obtain
\begin{align}
\| \dt \psi(\bar n)\|_{\l^\infty_{\bar n}}
\les \|w \|_{H^\frac{1}{6}}^4 .
\label{Ynonlin3}
\end{align}

\noi
Hence, from the estimate \eqref{Znonlin4} and \eqref{Ynonlin1a} on $\I$
with \eqref{Ynonlin2} and \eqref{Ynonlin3}, 
we can estimate $\II$ in~\eqref{Xnonlin1} by 
\begin{align*}
\| \II (t) \|_{H^{\s+1+}} \les t \sup_{t' \in [0, t]} \|w(t') \|_{H^\s}^5 + t^2\sup_{t' \in [0, t]} \|w(t') \|_{H^\s}^7
\end{align*}

\noi
for $\s \geq \frac 16$.
Noting that the terms $\IV_j$, $j = 1, 2$, in \eqref{Xnonlin1}
 have the same structure
as $\I$ with an extra factors of $\Xi(n_j)$, 
it follows from the estimates on $\I$ and \eqref{Ynonlin2b} that 
\begin{align*}
\|  \IV_j (t) \|_{H^{\s+1+}} & \les t^2 \sup_{t' \in [0, t]}\|w(t') \|_{H^\s}^7
\end{align*}

\noi
for $j = 1, 2$.
Similarly, by noting that $\wt\I$, $\wt\II$, and $\wt \III$ in \eqref{Ynonlin1} basically have the same structure
as $\I$ in \eqref{Xnonlin1}
with two extra factors of $\ft w_n$
(and $ \psi(\bar n) + t' \dt \psi(\bar n)$ for $\wt \III$), 
we have
\begin{align*}
\| \wt \I (t) \|_{H^{\s+1+}} & \les t \|w(t) \|_{H^\s}^5, 
\\
\| \wt \II (t) \|_{H^{\s+1+}} & \les t \sup_{t' \in [0, t]} \|w(t') \|_{H^\s}^5,  
\\
\| \wt \III (t) \|_{H^{\s+1+}} & \les t^ 2 \sup_{t' \in [0, t]}\|w(t') \|_{H^\s}^7 + t^3  \sup_{t' \in [0, t]}\|w(t') \|_{H^\s}^9.
\end{align*}

\noi
As for $\wt \IV_0$ and $\wt \IV_3$ in \eqref{Ynonlin1}, we first observe that 
\begin{align}
\| \dt \ft w_n\|_{\l^\infty_n} 
\les \| w\|_{H^\frac{1}{6}}^3
+ t \|\ft w_n\|_{\l^\infty_n}^2\| w\|_{H^\frac{1}{6}}^3
 \les \| w\|_{H^\frac{1}{6}}^3
+ t\| w\|_{H^\frac{1}{6}}^5, 
\label{Ynonlin4}
\end{align}

\noi
which follows from \eqref{3NLS5} and \eqref{XX1}.
Then, by noting that $\wt \IV_0$ and $\wt \IV_3$  are basically $\I$
with extra factors of $\dt \ft w_n$ and $\ft w_n$, we obtain 
\begin{align*}
\| \IV_j (t) \|_{H^{\s+1+}}  \les t^ 2 \sup_{t' \in [0, t]}\|w(t') \|_{H^\s}^7 + t^3  \sup_{t' \in [0, t]}\|w(t') \|_{H^\s}^9
\end{align*}

\noi
for $j = 0, 3$.

It remains to consider $\wt \IV_j$, $j = 1, 2$, in \eqref{Ynonlin1}.
Note that they are basically $\III_j$ in \eqref{Xnonlin1}, 
where we replaced
\begin{align*}
\sum_{(m_1, m_2, m_3) \in \G(n_j)} e^{it' \theta(n_j, \bar m)}
 \ft w_{m_1}\cj{\ft w_{m_2}}\ft w_{m_3}
\end{align*}

\noi
by $\dt \ft w_{n_j}$ and added two extra factors of $\ft w_n$.
By a small modification of  \eqref{Ynonlin4}, we have
\begin{align}
\| \jb{n_j}^{\s - \frac 16} \dt \ft w_{n_j}\|_{\l^\infty_{n_j}} 
 \les \| w\|_{H^\s}^3
+ t\| w\|_{H^\s}^5
\label{Ynonlin6}
\end{align}

\noi
for $\s \geq \frac 16$.
Then, by repeating the computation in Case (i) above with \eqref{Ynonlin6}, we obtain 
\begin{align}
\| \IV_j (t) \|_{H^{\s+1+}}  \les t^ 2 \sup_{t' \in [0, t]}\|w(t') \|_{H^\s}^7 + t^3  \sup_{t' \in [0, t]}\|w(t') \|_{H^\s}^9
\label{Ynonlin7}
\end{align}

\noi
for $j = 1, 2$ when $\phi(\bar n)$ satisfies \eqref{phi3}.
Lastly, noting from \eqref{3NLS5} 
that the contribution from $\ft{ \NN_2(w)}(n_j)$ in $\dt \ft w_{n_j}$
is basically $\ft{ \NN_1(w)}(n_j)$ with two extra factors of $\ft w_{n_j}$.
Hence, 
by repeating the computation in Case (ii), 
we also obtain \eqref{Ynonlin7}
 when $\phi(\bar n)$ satisfies \eqref{phi4}.

This completes the proof of Lemma \ref{LEM:Xnonlin}.
\end{proof}

\begin{remark}\label{REM:Ramer}\rm
As mentioned above, once we have Lemma \ref{LEM:Xnonlin}, 
we can prove Proposition~\ref{PROP:RA}
with $j = 1$ and $\frac 34 < s \leq 1$
by repeating the proof of Proposition 5.3 in \cite{OTz1}.
In this case, we need to interpret the nonlinear part 
$K_1(t)(u_0) 
 = \Nf_1(w)(t) +  \Nf_2(w)(t)$ of the dynamics~\eqref{XNLS1}
as those given by the right-hand sides
of \eqref{Xnonlin1} and \eqref{Ynonlin1}.
In particular, in computing the derivative $DK_j(t)|_{u_0}$
for $u_0 \in B_R \subset H^\s(\T)$,
we need to take derivatives of the complex exponentials such as $e^{i t \psi(\bar n)}$
since $\psi(\bar n)$ depends on $w$.
While this introduces extra terms, 
it does not cause any issue since
such derivatives can be easily bounded.
For example, let $F(t)  = e^{i t \psi(\bar n)}$.
Then, with \eqref{psi1}, we have 
\begin{align*}
DF(t)|_{u_0}(\w(0)) 
& =  it F(w)(t)  D\psi(\bar n) (t)|_{u_0}(\w(0))\\
& =  2 it F(w)(t) \Re(  - \ft w_n \cj{\ft \w_n}
+ \ft w_{n_1} \cj{\ft \w_{n_1}}
- \ft w_{n_2} \cj{\ft \w_{n_2}}+ \ft w_{n_3} \cj{\ft \w_{n_3}}), 
 \end{align*}

\noi
where $\w$ is the solution to the linearized equation
for \eqref{XNLS1} around the solution $w$ to \eqref{XNLS1} with $w|_{t = 0} = u_0$.
Hence, we have
\begin{align}
\big\|DF(t)|_{u_0}(\w(0)) \big\|_{\l^\infty_{\bar n}}
\les t \| w(t) \|_{L^2} \|\w(t)\|_{L^2}.
\label{Rem1}
 \end{align}

\noi
By combining this with (the proof of) Lemma \ref{LEM:Xnonlin}, 
we obtain 
\begin{align}
\big\| \jb{\dx}^{\frac{1}{2}+}DK_1(t)|_{u_0}(\w(0)) \big\|_{H^{s}} 
 \les
\sum_{j = 0}^4 t^j \sup_{t' \in [0, t]} \|w(t')\|_{H^{s-\frac 12-}}^{2j+2}\|\w(t')\|_{H^{s-\frac 12-}}
\label{Rem2}
\end{align}

\noi
for 
$\frac 34 < s \leq 1$.  Note that when differentiation hits
the complex exponentials, \eqref{Rem1} increases the value of $j$
by 1 in the statement of Lemma \ref{LEM:Xnonlin}
and hence we needed to include $j = 4$ in~\eqref{Rem2}.
Once we have \eqref{Rem2}, one can follow the argument in \cite{OTz1}
and prove  $DK_1(t)|_{u_0} \in HS(H^s(\T))$
for any $u_0 \in B_R\subset H^{s-\frac{1}{2}-}(\T)$.

\end{remark}

\begin{ackno}\rm
T.O.~was supported by the European Research Council (grant no.~637995 ``ProbDynDispEq'').
Y.T.~ was partially supported by JSPS KAKENHI Grant-in-Aid for Scientific Research (B) (17H02853) and Grant-in-Aid for Exploratory Research (16K13770).
\end{ackno}

\end{document}